\tikzstyle{normal}=[circle,inner sep=0pt, fill=black,  minimum size=1.5mm, draw]
\declaretheorem[numberwithin=section]{theorem}
\declaretheorem[numberlike=theorem]{lemma}
\declaretheorem[numberlike=theorem]{proposition}
\declaretheorem[numberlike=theorem, name=Claim]{assertion}
\theoremstyle{definition}
\declaretheorem[numberlike=theorem]{question}
\theoremstyle{remark}
\declaretheorem[numberlike=theorem]{remark}
\newcommand{\ab}{\mathbf{a}}
\newcommand{\ub}{\mathbf{u}}
\newcommand{\af}{\mathfrak{a}}
\newcommand{\NN}{\mathbb{N}}
\newcommand{\ZZ}{\mathbb{Z}}
\newcommand{\QQ}{\mathbb{Q}}
\newcommand{\RR}{\mathbb{R}}
\newcommand{\kk}{\Bbbk}
\newcommand{\set}[1]{{\{#1\}}}
\newcommand{\with}{\,\colon\,}
\newcommand{\restr}[2]{{#1}|_{#2}}
\newcommand{\Hi}[2]{\tilde{H}_{#1}({#2};\kk)}
\newcommand{\Fc}{\mathcal{F}}
\DeclareMathOperator{\Tor}{Tor}
\DeclareMathOperator{\Char}{char}
\DeclareMathOperator{\reg}{reg}
\DeclareMathOperator{\pdim}{pdim}
\DeclareMathOperator{\lcm}{lcm}
\DeclareMathOperator{\DEG}{\mathbf{deg}}
\newcommand{\MA}{\mathcal{Z}_{\Delta}}
\newcommand{\gena}[1]{\mathbf{e}_{#1}}
\newcommand{\gen}[1]{\gena{\set{#1}}}
\newcommand{\genaE}[1]{\mathbf{\widehat{e}}_{#1}}
\newcommand{\genE}[1]{\genaE{\set{#1}}}
\newcommand{\cls}[1]{{[{#1}]}}
\let\phi\varphi
\newcommand{\abSc}{ab\#c}
\newcommand{\bcSa}{bc\#a}
\newcommand{\caSb}{ca\#b}
\begin{document}

\title[A non-Golod ring]{A non-Golod ring with a trivial product on its Koszul homology}

\author{Lukas Katth\"an}

\address{Goethe-Universit\"at Frankfurt, FB Informatik und Mathematik, 60054 Frankfurt am Main, Germany}
\email{katthaen@math.uni-frankfurt.de}

\keywords{Golod Ring; Stanley-Reisner ring; Monomial ideal; Massey product.}
\subjclass[2010]{Primary: 05E40; Secondary: 13D02,13F55.}

\begin{abstract}
	We present a monomial ideal $\af \subset S$ such that $S/\af$ is not Golod, even though the product in its Koszul homology is trivial.
	This constitutes a counterexample to a well-known result by Berglund and Jöllenbeck (the error can be traced to a mistake in an earlier article by Jöllenbeck).
	
	On the positive side, we show that if $R$ is a monomial ring such that the $r$-ary Massey product vanishes for all $r \leq \max(2, \reg R-2)$, then $R$ is Golod.
	In particular, if $R$ is the Stanley-Reisner ring of a simplicial complex of dimension at most $3$, then $R$ is Golod if and only if the product in its Koszul homology is trivial.
	
	Moreover, we show that if $\Delta$ is a triangulation of a $\kk$-orientable manifold whose Stanley-Reisner ring is Golod, then $\Delta$ is $2$-neighborly.
	This extends a recent result of Iriye and Kishimoto.
\end{abstract}

\maketitle

\section{Introduction}
Let $S = \kk[x_1, \dots, x_n]$ be a polynomial ring over some field $\kk$, endowed with the fine $\ZZ^n$-grading, and let $\af \subset S$ be a monomial ideal.
The (multigraded) \emph{Betti-Poincar\'e} series of $R := S/\af$ is the formal power series
\[ P_\kk^R(t,z_1,\dotsc,z_n) := \sum_{j \geq 0} \sum_{\ab \in\NN^n} \dim_\kk \Tor^R_j(\kk,\kk)_\ab t^j z_1^{a_1}\dotsm z_n^{a_n}, \]
where $\Tor^R_j(\kk,\kk)_\ab$ denotes the homogeneous component of $\Tor^R_j(\kk,\kk)$ in multidegree $\ab$.
We further consider the formal power series
\[
	Q_\kk^R(t,z_1,\dotsc,z_n) := \frac{\prod_{i=1}^n(1+t z_i)}{1-\sum_{j\geq 1}\sum_{\ab \in\NN^n} \dim_\kk H_j(K_R)_\ab t^{j+1} z_1^{a_1}\dotsm z_n^{a_n}},
\]
where $K_R$ denotes the Koszul complex of $R$.
The ring $R$ is called a \emph{Golod ring} if 
\begin{equation}\label{eq:ps}
	P_\kk^R(t,1,\dotsc,1) = Q_\kk^R(t,1,\dotsc,1)
\end{equation}
As reported by Golod in \cite{golod}, Serre proved that every ring $R$ satisfies the coefficientwise inequality $P_\kk^R(t,z_1,\dotsc,z_n) \leq Q_\kk^R(t,z_1,\dotsc,z_n)$.
Therefore, the Golod property is equivalent to the seemingly stronger condition that \[P_\kk^R(t,z_1,\dotsc,z_n) = Q_\kk^R(t,z_1,\dotsc,z_n)\text{.}\]
In the same article, Golod showed that $R$ satisfies \eqref{eq:ps} if and only if the product and all higher Massey products on the Koszul homology $H_*(K_R)$ are trivial.
Here, we say that the product is \emph{trivial} if the product of every two elements of positive homological degrees is zero, and the higher Massey products are trivial if they are all defined and contain only zero.

The main contribution of the present article is an example of a monomial ideal $\af \subset S$ such that the product in $H_*(K_{S/\af})$ is trivial, but $S/\af$ is not Golod. The example is given in \cref{thm:main}. As far as we know, this is the first example of a non-Golod ring $R$ with a trivial product in $H_*(K_R)$.

Our example provides a counterexample to a claim by Berglund and J\"ollenbeck:
\begin{assertion}[Theorem 5.1, \cite{BJ07}]\label{thm:wrong}
	Let $\af \subset S$ be a monomial ideal and let $R := S/\af$.
	Then the following are equivalent:
	\begin{enumerate}
		\item $R$ is Golod.
		\item The product in the Koszul homology of $R$ is trivial.
	\end{enumerate}
\end{assertion}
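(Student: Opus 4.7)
The implication (1) $\Rightarrow$ (2) is the easy half and requires no work beyond citation: by the theorem of Golod recalled in the introduction, if $R$ is Golod then the product and every higher Massey product on $H_*(K_R)$ vanish, and in particular the binary product does. So the plan is to focus entirely on (2) $\Rightarrow$ (1), which amounts to showing that the vanishing of the binary product already forces the vanishing of every higher Massey product on $H_*(K_R)$, since that is what Golod's criterion demands.

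To attempt this for a monomial ideal, I would exploit the combinatorial description of $H_*(K_R)$ given by Hochster's formula: each homology class is represented, multidegree by multidegree, by a cycle in a reduced chain complex of a well-defined subcomplex of the upper Koszul complex. My first step would be to fix a canonical choice of cycle representative for each basis class, compatible with this combinatorial structure. The second step would be to use an explicit resolution of $R$ with combinatorial flavour (Taylor, or better the algebraic discrete Morse theoretic resolution used by Jöllenbeck) to produce, whenever the binary product $[\alpha]\cdot[\beta]$ vanishes, an explicit bounding cochain $\gamma(\alpha,\beta)$ with controlled multidegree support. The third and crucial step would be to assemble these bounding cochains into a defining system for a higher Massey product $\langle\alpha_1,\dots,\alpha_r\rangle$ and to verify, by induction on $r$, that the resulting cycle is again a boundary.

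The main obstacle is precisely this third step. The coherence conditions required of a defining system for an $r$-fold Massey product couple all intermediate bounding cochains, and there is no obvious mechanism by which pairwise triviality of the product should extend to arbitrarily high $r$. In particular, even if the bounding cochains $\gamma(\alpha_i,\alpha_{i+1})$ of step two can be chosen combinatorially, the sums $\sum \gamma(\alpha_i,\alpha_{i+1})\cdot\alpha_{i+2} + \alpha_i\cdot\gamma(\alpha_{i+1},\alpha_{i+2})$ that appear at the next stage need not be boundaries of anything controlled, and one is forced to make further choices that a priori may obstruct the next induction step. Given that the paper advertises a counterexample, I would expect the inductive step to fail at exactly the smallest $r$ for which a Massey product can live in a new combinatorial multidegree that is not predicted by the binary multiplication, and I would look for the error in \cite{BJ07} at the point where such higher defining data are implicitly assumed to exist canonically. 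The positive result announced in the abstract, controlling $R$ by $\reg R$, suggests that the inductive argument does work up to $r\le\max(2,\reg R-2)$, so the counterexample must live in regularity at least $5$.
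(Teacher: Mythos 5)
You were right not to manufacture a proof: \cref{thm:wrong} is false, and the paper never proves it --- it is stated only as a Claim attributed to \cite{BJ07} precisely in order to be refuted. Your direction (1)$\Rightarrow$(2) is indeed immediate from Golod's theorem, and your analysis of why the third step of your plan cannot be completed is exactly the point of the paper: pairwise triviality of the product gives no control over defining systems for higher Massey products. The counterexample (\cref{thm:main}) is the ideal $\af = (x_1x_2^2,\ y_1y_2^2,\ z^3,\ x_1x_2y_1y_2,\ y_2^2z^2,\ x_2^2z^2,\ x_1y_1z,\ x_2^2y_2^2z)$ in $\kk[x_1,x_2,y_1,y_2,z]$: all products in $\Tor^S_*(S/\af,\kk)$ vanish, yet the ternary Massey product $\mu_3\bigl(\cls{\genE{x_1x_2^2}},\cls{\genE{y_1y_2^2}},\cls{\genE{z^3}}\bigr)$ is nonzero, detected combinatorially through the Taylor complex (\cref{lem:3mas} and \cref{prop:fiber}). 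Both of your quantitative predictions are confirmed: the example has $\reg S/\af = 5$, the minimum permitted by \cref{thm:reg}, and \cref{thm:min} shows that any such example requires at least five variables and eight minimal generators --- exactly the size of the one given.

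One refinement to your guess about where \cite{BJ07} goes wrong: the error is traced back to \cite{Jo06} rather than to the defining systems in \cite{BJ07} itself. There it is asserted (p.~268 of \cite{Jo06}) that the Taylor complex of any monomial ideal admits a \emph{standard matching}, a discrete Morse matching compatible with the multiplicative structure of the Taylor resolution; this existence claim fails, e.g.\ for $(x_1^2, x_1x_2, x_2x_3, x_3x_4, x_4^2)$, an ideal borrowed from the known examples of minimal free resolutions admitting no DGA structure \cite{AvraObstr}. So the failure is indeed an unjustified assumption that coherent higher multiplicative data can be chosen canonically, as you suspected, but it is located one paper earlier and phrased in terms of Morse matchings rather than Massey defining systems.
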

We would like to point out that the claim in \cite{BJ07} fails because its proof builds on an incorrect result of \cite{Jo06}.
A list of special cases (both new and known) in which \cref{thm:wrong} does hold is collected in \cref{thm:classes}.

The Golod property of quotients by monomial ideals is related to certain topological features of \emph{moment-angle complexes}.
Indeed, if $\af$ is a squarefree monomial ideal, then it can be interpreted as the Stanley-Reisner ideal of a simplicial complex $\Delta$.
The moment-angle complex $\MA$ is a certain topological space associated to $\Delta$, which was introduced by Davis and Januszkiewicz~\cite{DJ}.
A prominent feature of this space is that there is an isomorphism of $\kk$-algebras
\[ H^*(\MA;\kk) \cong H_*(K_R), \]
cf.~\cite[Theorem 7.7]{BP}.
A lot of research has been devoted to study the relation between the structure of $H_*(K_R)$ and the topology of $\MA$, see for example 
\cite{DS,IK, IK2, GPTW12}.
In terms of moment-angle complexes, our example gives rise to a moment-angle complex which is not formal but has a trivial cup-product in its cohomology.

In view of our example, it seems natural to ask whether one can bound the arity of the Massey products one needs to consider.
Indeed, if $R$ is a monomial ring, then it is Golod if all $r$-ary Massey products vanish for all $r \leq \max(2,\reg R - 2)$, see \Cref{thm:reg} below. In particular, the Stanley-Reisner ring of simplicial complex of dimension at most $3$ is Golod if and only if the product in its Koszul homology of $\kk[\Delta]$ is trivial.

Recently, Iriye and Kishimoto~\cite[Theorem 1.3]{IK2} showed that the Stanley-Reisner ring $\kk[\Delta]$ of a triangulated $\kk$-orientable surface $\Delta$ is Golod if and only if $\Delta$ is $2$-neighborly, i.e. if any two vertices are connected by an edge.
The methods we use to prove \cref{thm:reg} also allow us to generalize one of the implications to $\kk$-orientable manifolds of arbitrary dimension, cf. \Cref{thm:mf}.

This article is structured as follows.
In \cref{sec:prelim} some definitions and basic facts concerning Golod rings are recalled. Also, we describe how the Taylor complex can be used to compute Massey products.
After that, we prove our main result in \cref{sec:example}.
In the following \cref{sec:reg}, we prove \cref{thm:reg} and \cref{thm:mf}.
In \cref{sec:how} we sketch the considerations that led to us to find our main example.
In the last \cref{sec:remark} some remarks and an open questions are added.

\section{Preliminaries}\label{sec:prelim}
In this section, we recall some facts about Massey products and Golod rings.
We refer the reader to \cite{AvraG} and \cite{Avra} for a comprehensive treatment of general Golod rings.
Also, we describe how the Taylor resolution of a monomial ring can be used to compute Massey products.

\subsection{Massey products of DGAs}
Let us recall the definition of the Massey products of a differential graded algebra (DGA) $A$.
The binary Massey product is just the usual product which is inherited from $A$.
Let us denote it by $\mu_2(a_1,a_2)$.
For $n \geq 3$, the $n$-ary Massey product is a partially defined set-valued function, which assigns to $n$ elements $a_1, \dotsc, a_n \in H_*(A)$ a set $\mu_n(a_1, \dotsc, a_n) \subset H_*(A)$.
It is defined if there exist elements $a_{ij} \in A$ for $1 \leq i \leq j \leq n$, such that $da_{ii} = 0$, $\cls{a_{ii}} = a_i$ and 
\[ da_{ij} = \sum_{v=i}^j \bar{a}_{iv}a_{vj}, \]
where $\cls{a_{ii}}$ denotes the homology class of $a_{ii}$ and $\bar{a} = (-1)^{|a|+1}$. 
Then $\cls{\sum_{v=1}^n \bar{a}_{iv}a_{vj}}$ is called an ($n$-ary) Massey product of $a_1, \dotsc, a_n$ and $\mu_n(a_1, \dotsc, a_n)$ is the set of all these elements.
Further, we say that $A$ satisfies $(B_r)$, if all $k$-ary Massey products are defined and contain only zero for all $k \leq r$.
We will use the following lemma.
\begin{lemma}\label{lemma:unique}
	Let $A$ be a DGA satisfying $(B_{r-1})$.
	Then $\mu_{r}(a_1, \dotsc, a_{r})$ is defined and contains a single element for all $a_1, \dotsc, a_{r} \in H_*(A)$.
\end{lemma}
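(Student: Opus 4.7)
My plan is to prove in parallel, by induction on $r$, that $\mu_r(a_1, \ldots, a_r)$ is defined and that it contains a single element. The base case $r = 2$ is immediate, since $\mu_2(a_1, a_2) = \{a_1 \cdot a_2\}$ is just the ordinary cohomology product, which is always defined and unique regardless of any hypothesis.

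For the inductive step I assume $(B_{r-1})$. First, I construct a defining system $\{a_{ij}\}$ by induction on $j - i$. Starting from arbitrary cycle representatives $a_{ii}$ of the classes $a_i$, for each $(i, j)$ with $0 < j - i \leq r - 2$, and assuming all $a_{kl}$ with strictly smaller $l - k$ have already been chosen, the subfamily $\{a_{kl} : i \leq k \leq l \leq j, \, (k, l) \neq (i, j)\}$ is precisely a defining system for $\mu_{j - i + 1}(a_i, \ldots, a_j)$. Since $j - i + 1 \leq r - 1$, hypothesis $(B_{r-1})$ gives $\mu_{j - i + 1}(a_i, \ldots, a_j) = \{0\}$, so the obstruction cycle $\sigma_{ij} := \sum_v \bar{a}_{iv} a_{v+1, j}$ is a boundary and I may choose $a_{ij}$ with $d a_{ij} = \sigma_{ij}$. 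The resulting cycle $c := \sigma_{1r}$ then represents an element of $\mu_r(a_1, \ldots, a_r)$, showing that the latter is defined.

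For uniqueness, I compare two defining systems $\{a_{ij}\}$ and $\{a'_{ij}\}$ yielding cycles $c, c'$ by interpolating through a chain of intermediate systems that differ in a single entry $a_{kl}$ at a time. A single difference $\xi := a_{kl} - a'_{kl}$ is automatically a cycle, since the defining relation for $a_{kl}$ depends only on the other entries; but to preserve the defining relations for the larger entries one must simultaneously modify all $a_{ij}$ with $i \leq k$ and $j \geq l$. The cumulative change in $c$ can then be re-expressed as a product of $\xi$ with a cycle representing a lower-arity Massey product over a contiguous sub-sequence of $(a_1, \ldots, a_r)$, together with a symmetric term on the other side. By $(B_{r-1})$ every such representative is a boundary, so $\cls{c} = \cls{c'}$.

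The main obstacle is this propagation bookkeeping in the uniqueness step. A cleaner but less elementary alternative is to invoke the classical indeterminacy formula for higher Massey products, which exhibits the indeterminacy of $\mu_r(a_1, \ldots, a_r)$ as a sum of products of $H_*(A)$ with lower Massey products over proper contiguous subsequences of $(a_1, \ldots, a_r)$; the hypothesis $(B_{r-1})$ makes every such term vanish, and $\mu_r(a_1, \ldots, a_r)$ collapses to a singleton.
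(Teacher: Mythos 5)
The paper does not actually prove this lemma; it quotes it from the sources it cites (\cite[Proposition 2.3]{May} and \cite[Lemma 20]{Kraines}), so your direct argument is in effect an attempt to reproduce those classical proofs. The first half of your proposal (definedness) is fine and is the standard argument: building the $a_{ij}$ by induction on $j-i$, the class of each obstruction cycle $\sigma_{ij}$ lies in a $(j-i+1)$-ary Massey product with $j-i+1\leq r-1$, which is $\{0\}$ by $(B_{r-1})$, so $\sigma_{ij}$ bounds and the system can be extended. (The outer induction on $r$ is superfluous, but harmless.)

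The uniqueness half is where the whole content of the lemma sits, and that is exactly the part your sketch leaves open, in three concrete respects. First, the key claim is misstated: replacing $a_{kl}$ by $a_{kl}+\xi$ and correcting the larger entries changes $c$ not by ``$\xi$ times a representative of a lower Massey product of the $a_i$'', but by a representative of the lower-arity product $\mu_{r-(l-k)}(a_1,\dotsc,a_{k-1},\cls{\xi},a_{l+1},\dotsc,a_r)$, in which $\cls{\xi}$ is inserted as a new argument. For instance, for $r=4$, changing $a_{12}$ by a cycle $\xi$ forces a correction $e$ with $de=\bar{\xi}a_{33}$, and $c$ changes by $\bar{\xi}a_{34}+\bar{e}a_{44}$, a representative of $\mu_3(\cls{\xi},a_3,a_4)$; note that $a_{34}$ is not a cycle, so this is not of the form you describe. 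Establishing this identity in general is precisely the ``propagation bookkeeping'' you defer, i.e.\ the actual proof. Second, two defining systems may differ in a diagonal entry, i.e.\ in the chosen cycle representative of some $a_k$; there the difference is a boundary $db$, the block has length one, so the product your mechanism produces would be $r$-ary and $(B_{r-1})$ does not apply. This case needs the separate homotopy-type correction (absorbing $db$ into the adjacent rows and columns and checking directly that $c$ changes by a boundary), which your argument does not cover. Third, the fallback you invoke --- a ``classical indeterminacy formula'' expressing the indeterminacy of $\mu_r$ as sums of products of $H_*(A)$ with lower Massey products over contiguous subsequences --- is not a theorem for $r\geq 4$: the indeterminacy of higher Massey products admits no such description in general (it need not even be a coset of a subgroup), so this shortcut cannot replace the defining-system comparison carried out in Kraines' and May's proofs.
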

This is a special case of \cite[Proposition 2.3]{May}, see also \cite[Lemma 20]{Kraines}.
In the second reference, the result is claimed only for classes of odd degree, but the proof also holds in general.

By the following result, the Massey products depend only on the \enquote{hotomopy type} of a DGA.
It is essentially a special case of \cite[Theorem 1.5]{May}.
\begin{proposition}\label{prop:DGAmap}
	Let $A,B$ be DGAs, $f: A \to B$ be a map of DGAs and $a_1, \dotsc, a_r \in H_*(A)$.
	\begin{enumerate}
		\item If $\mu_r(a_1, \dotsc, a_r)$ is defined then also $\mu_r(f_*(a_1), \dotsc, f_*(a_r))$ is defined, and it holds that
		\begin{equation}\label{eq:quism}
			f_*(\mu_r(a_1, \dotsc, a_r)) \subseteq \mu_r(f_*(a_1), \dotsc, f_*(a_r)).\tag{$*$}
		\end{equation}
		\item If $f$ is a quasi-isomorphism, then $\mu_r(a_1, \dotsc, a_r)$ is defined if and only if $\mu_r(f_*(a_1), \dotsc, f_*(a_r))$ is defined and equality holds in \cref{eq:quism}
	\end{enumerate}
\end{proposition}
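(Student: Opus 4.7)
The plan is to unwind the definition of Massey products on both sides and use functoriality. For part (1), I would apply $f$ termwise to a defining system $\{a_{ij}\}_{1\le i\le j\le r}$ of $\mu_r(a_1,\dotsc,a_r)$ to produce a family $\{f(a_{ij})\}$ in $B$ and verify the defining equations. Since $f$ is a map of DGAs (so it commutes with $d$ and the product, and preserves degree, hence preserves the sign $\bar{\cdot}$), one has $d\,f(a_{ii})=0$ with $\cls{f(a_{ii})}=f_*(a_i)$, and
\[ d\,f(a_{ij}) \;=\; f(d a_{ij}) \;=\; f\Bigl(\sum_{v=i}^{j} \bar a_{iv}\,a_{vj}\Bigr) \;=\; \sum_{v=i}^{j} \overline{f(a_{iv})}\,f(a_{vj}). \]
Hence $\{f(a_{ij})\}$ is a defining system for $\mu_r(f_*(a_1),\dotsc,f_*(a_r))$, showing it is defined, and the representative $\sum_v \bar a_{1v}a_{vr}$ of $\mu_r(a_1,\dotsc,a_r)$ is carried by $f$ to a representative of the target Massey product. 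This establishes the inclusion \cref{eq:quism}.

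For part (2) I would argue by induction on $j-i$, pulling back a given defining system $\{b_{ij}\}$ for $\mu_r(f_*(a_1),\dotsc,f_*(a_r))$ to a defining system in $A$. Choose cycles $a_{ii}\in A$ representing $a_i$; each $f(a_{ii})$ is cohomologous to $b_{ii}$ in $B$. Having constructed $a_{kl}$ for all $(k,l)$ with $l-k<j-i$, the element $c_{ij}:=\sum_v \bar a_{iv}a_{vj}\in A$ is a cycle by a standard computation using the already-verified defining relations. Its image $f(c_{ij})=\sum_v \overline{f(a_{iv})}\,f(a_{vj})$ is cohomologous in $B$ to $\sum_v \bar b_{iv}b_{vj}=d\,b_{ij}$, hence a boundary in $B$. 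Because $f$ is a quasi-isomorphism, $f_*$ is injective on homology, so $c_{ij}$ is itself a boundary in $A$; pick $a_{ij}\in A$ with $d\,a_{ij}=c_{ij}$. Using the surjectivity of $f_*$ and the available gauge freedom in the Massey-product defining system, adjust $a_{ij}$ so that $f(a_{ij})$ is cohomologous to $b_{ij}$, keeping the induction going.

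Once the defining system $\{a_{ij}\}$ in $A$ is assembled, its Massey-product representative $\sum_v \bar a_{1v}a_{vr}$ is sent by $f$ to a representative cohomologous to $\sum_v \bar b_{1v}b_{vr}$, so every element of $\mu_r(f_*(a_1),\dotsc,f_*(a_r))$ lies in $f_*(\mu_r(a_1,\dotsc,a_r))$; together with part (1) this yields equality in \cref{eq:quism}.

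The main obstacle is the bookkeeping in the inductive step: one cannot expect $f(a_{ij})=b_{ij}$ on the nose, only up to a coboundary, and that coboundary has to be tracked through the induction so that the final Massey-product representatives are genuinely cohomologous. The indeterminacy of Massey products, governed by lower-order Massey products which $f_*$ matches isomorphically, is precisely what absorbs this slack. Since the result is essentially \cite[Theorem 1.5]{May}, a cleaner route is to cite that theorem directly and observe that our hypotheses (a genuine DGA map, respectively a quasi-isomorphism) are the specialization needed here.
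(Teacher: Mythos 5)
Part (1) of your proposal is correct and is the standard functoriality argument; the paper itself does not write this out, but simply invokes \cite[Theorem 1.5]{May}, of which the whole proposition is a special case.

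Part (2), however, has a genuine gap in the inductive step. For $j>i$ the entries $a_{ij}$, $b_{ij}$ of a defining system are not cycles, so the phrase \enquote{$f(a_{ij})$ is cohomologous to $b_{ij}$} has no meaning; and even if you reformulate it as \enquote{differs from $b_{ij}$ by a boundary}, the induction does not close. From $f(a_{iv})=b_{iv}+dh_{iv}$ one cannot conclude that $\sum_v \overline{f(a_{iv})}\,f(a_{vj})$ differs from $\sum_v \bar b_{iv}b_{vj}=db_{ij}$ by a boundary: the difference contains terms such as $\pm\,\bar b_{iv}\,dh_{vj}$, and since $b_{iv}$ is not a cycle these are not boundaries in general. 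The standard remedy is to construct, along with the $a_{ij}$, explicit homotopy correction terms $h_{ij}$ woven coherently into a modified defining system in $B$ (strictly equivalent to $\{b_{ij}\}$), so that at the end the two Massey cocycles are genuinely cohomologous; this bookkeeping is exactly the content of the proof in \cite{May}, and your appeal to \enquote{gauge freedom} plus the claim that the indeterminacy is \enquote{governed by lower-order Massey products which $f_*$ matches isomorphically} does not supply it --- the indeterminacy of an $r$-fold Massey product is not controlled by lower-order products in general, and no vanishing hypotheses are in force here. Finally, your fallback of citing \cite[Theorem 1.5]{May} \enquote{directly} is what the paper does, but it does not by itself settle the point your induction is aimed at: as the paper remarks, May's statement contains the inclusion and the equality but not the \enquote{if} half of (2) (that definedness of $\mu_r(f_*(a_1),\dotsc,f_*(a_r))$ forces definedness of $\mu_r(a_1,\dotsc,a_r)$), which must be extracted from May's proof rather than from his statement.
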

The statement in \cite[Theorem 1.5]{May} corresponding to our item (2) does not include the \enquote{if} part, but the latter follows from the proof given there.

\subsection{Computing Massey products via the Taylor resolution}
Let $S = \kk[x_1,\dotsc, x_n]$ be a polynomial ring, $\af \subset S$ a monomial ideal and $R := S/\af$. We are interested in two DGAs associated with $R$: On the one hand, its Koszul complex $K_R$ and on the other hand its \emph{Taylor resolution} $T_\bullet$.
Both DGAs inherit an \enquote{internal} multigrading from $R$ in addition to the natural \enquote{homological} $\NN$-grading.
For a homogeneous element $a$, we denote by $|a|$ its homological degree, by $\DEG a$ its internal multidegree and by $\deg a$ it internal $\ZZ$-degree.

We recall the definition of the Taylor complex $T_\bullet$, see also \cite[p. 67]{millersturm}.
Let $G(\af)$ denote the minimal set of monomial generators of $\af$ and choose a total order $\prec$ on $G(\af)$.
The choice of the order affects only the signs in the computations.

Then $T_\bullet$ is the complex of free $S$-modules with basis $\set{\gena{I} \with I \subseteq G(\af)}$.
The basis elements are graded by $|\gena{I}| := \#I$ and $\DEG \gena{I} := \DEG m_I$, where $m_I := \lcm(m \with m \in I)$ for $I \subseteq G(\af)$.
Further, the differential is given by
\[ \partial \gena{I} = \sum_{m \in I} (-1)^{\sigma(m,I)}\frac{m_I}{m_{I\setminus\set{m}}} \gena{I\setminus\set{m}}, \]
where $\sigma(m,I) := \#\set{m' \in I, m' \prec m}$.
The complex $T_\bullet$ carries a DGA structure (cf. \cite{Gemeda} or \cite[\S 5]{Froberg}) with the multiplication given by
\[ \gena{I} \cdot \gena{J} = \begin{cases}
	(-1)^{\sigma(I,J)} \frac{m_I m_J}{m_{I\cup J}} \gena{I \cup J} & \text{if } I \cap J = \emptyset,\\
	0& \text{otherwise},
\end{cases} \]
where $\sigma(I,J) := \#\set{(m,m') \in I \times J \with m' \prec m}$.

The importance of the Taylor resolution for us stems from the fact that there are quasi-isomorphisms of DGAs \[T_\bullet \otimes_S \kk \leftarrow T_\bullet \otimes_S K_S \to R \otimes_S K_S = K_R,\]
which imply that $H_*(K_R) = \Tor_*^S(R, \kk) = H_*(T_\bullet \otimes_S \kk)$ and we can compute Massey products using either $K_R$ or $T_\bullet \otimes_S \kk$ (cf. \Cref{prop:DGAmap} (2)).
Finally, we note that Massey products are compatible with the grading in the following sense:
If $a_1, \dotsc, a_r \in H_*(K_R)$ are homogeneous elements and $a \in \mu_r(a_1, \dots, a_r)$ is a Massey product, then $\DEG a = \sum_i \DEG a_i$ and $|a| = \sum_i |a_i| + (r-2)$.

We now give a combinatorial description of the maps in $T_\bullet \otimes_S \kk$.
It is clear that $T_\bullet \otimes_S \kk$ is a direct sum of complexes of vector spaces, and the only multidegrees in which $T_\bullet \otimes_S \kk$ is non-trivial are the degrees in the lcm-lattice $L_\af := \set{\DEG m_I \with I \subseteq G(\af)}$ of $\af$.
For $\ub \in L_\af$ let $G_\ub := \set{m \in G(\af) \with \DEG m \leq \ub}$ and let
\[ \Fc_\ub := \set{ I \subseteq G_\ub \with \DEG \lcm(G_\ub \setminus I) = \ub}. \]
Note that $\Fc_\ub$ is a simplicial complex.
The next observation follows directly from the definition of the Taylor complex:
\begin{proposition}\label{prop:fiber}
	Let $\ub \in L_\af$. 
	Then the strand $(T_\bullet \otimes \kk)_\ub$ in multidegreee $\ub$ is isomorphic to the complex of simplicial cochains on $\Fc_m$, up to reversing and shifting the homological degrees.
	
	More precisely, if $I \subseteq G(\af)$ and $\DEG \lcm(I) = \ub$, then this isomorphism maps $\gena{I} \otimes_S 1_\kk$ to the
	$(\#G_\ub - \#I-1)$-cochain supported on the single simplex $G_\ub \setminus I$.
\end{proposition}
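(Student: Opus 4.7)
The plan is to unwind the definitions and construct the isomorphism basis-by-basis, then verify that it intertwines the differentials up to a sign correction.

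\textbf{First}, I would identify the underlying graded vector spaces. Fix $\ub \in L_\af$. A basis element $\gena{I} \otimes_S 1$ lies in multidegree $\ub$ precisely when $\DEG m_I = \ub$. Since every $m \in I$ divides $m_I$, this forces $I \subseteq G_\ub$; writing $J := G_\ub \setminus I$, the condition becomes $\DEG \lcm(G_\ub \setminus J) = \ub$, i.e.\ $J \in \Fc_\ub$. So sending $\gena{I} \otimes 1$ to (a scalar multiple of) the simplicial cochain indexed by $G_\ub \setminus I$ is a degree-wise bijection. The count $|\gena{I}| = \#I = \#G_\ub - \#J$ matches the cochain dimension $\#J - 1 = \#G_\ub - \#I - 1$, which is exactly the reversal-and-shift stated in the proposition. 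Along the way one checks that $\Fc_\ub$ is indeed a simplicial complex: shrinking $J$ enlarges $G_\ub \setminus J$, and since every element of $G_\ub$ has multidegree $\leq \ub$, the lcm of the enlarged set still lies $\leq \ub$, so the equality $\lcm(G_\ub \setminus J) = \ub$ is preserved.

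\textbf{Second}, I would simplify the differential. The Taylor differential involves the factor $\frac{m_I}{m_{I \setminus \set{m}}} \in S$, a monomial which maps to zero in $\kk$ unless $m_I = m_{I \setminus \set{m}}$. Hence after $\otimes_S \kk$ only the terms with $\lcm(I) = \lcm(I \setminus \set{m})$ survive, each with coefficient $\pm 1$. Translating through $J = G_\ub \setminus I$, the surviving indices $m$ are precisely those for which $J \cup \set{m} \in \Fc_\ub$, which is exactly the support of the simplicial coboundary applied to the cochain indexed by $J$.

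\textbf{Third}, I would reconcile the signs. A direct count shows that the Taylor sign $(-1)^{\sigma(m, I)}$ and the standard cochain coboundary sign $(-1)^{\#\set{m' \in J \with m' \prec m}}$ differ by the factor $(-1)^{\#\set{m' \in G_\ub \with m' \prec m}}$, which depends only on $m$ and $G_\ub$, not on $I$ or $J$. A basis rescaling of the form $\gena{I} \otimes 1 \mapsto \varepsilon(I)\,\gena{I} \otimes 1$ with $\varepsilon(I) := \prod_{m \in I}(-1)^{\#\set{m' \in G_\ub \with m' \prec m}}$ absorbs this factor cleanly and converts the Taylor differential into the simplicial coboundary on $\Fc_\ub$.

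The only genuinely non-routine point is the sign bookkeeping in the third step; everything else is a direct reading of the definitions of $T_\bullet$ and $\Fc_\ub$, together with the observation that $\otimes_S \kk$ kills precisely those Taylor differential terms that correspond to non-faces of $\Fc_\ub$.
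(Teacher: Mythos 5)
Your proof is correct and follows exactly the route the paper has in mind: the paper offers no written proof (it states that the proposition ``follows directly from the definition of the Taylor complex''), and your unwinding of the definitions supplies precisely those details, including the observation that tensoring with $\kk$ kills the differential terms whose lcm drops. The sign bookkeeping in your third step checks out (the discrepancy $(-1)^{\#\set{m' \in G_\ub \with m' \prec m}}$ depends only on $m$, so your diagonal rescaling $\varepsilon(I)$ does intertwine the Taylor differential with the simplicial coboundary), and note only that the face $I = G_\ub$ corresponds to the empty simplex, so the target is really the augmented (reduced) cochain complex of $\Fc_\ub$.
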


Now we turn to the computation of Massey products.
In order to simplify the notation, we define $\genaE{I} := \gena{I} \otimes_S 1_\kk \in T_\bullet \otimes \kk$ for $I \subseteq G(\af)$.
First, we give two simple sufficient conditions for the vanishing of products.
\begin{lemma}\label{lem:paar}
	Let $\af \subseteq S$ be a monomial ideal.
	\begin{enumerate}
	\item Let $a,b \in H_*(K_R)$ be two Koszul cycles which are homogeneous with respect to the multigrading.
	If the multidegrees of $a$ and $b$ are not orthogonal (i.e. they have a non-zero component in common), then $a \cdot b = 0$.
	\item Let $a,b \in G(\af)$ be coprime generators of $\af$.
	Then $\cls{\genE{a} \cdot \genE{b}} \in H_*(T_\bullet \otimes_S \kk)$ is zero if and only if there exists a generator $c \neq a, b$ of $\af$ which divides the least common multiple of $a$ and $b$. 
	\end{enumerate}
\end{lemma}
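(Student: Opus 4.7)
The plan is to perform all computations in the Taylor complex $T_\bullet \otimes_S \kk$ rather than directly in $K_R$, since by \Cref{prop:DGAmap} the quasi-isomorphism of DGAs identifies the products (and, later, all higher Massey products) on homology. Recall that
\[ \genaE{I} \cdot \genaE{J} = \pm \frac{m_I m_J}{m_{I \cup J}} \genaE{I \cup J} \quad \text{for } I \cap J = \emptyset, \]
and the coefficient simplifies to $\gcd(m_I, m_J)$, whose multidegree is the coordinatewise minimum of $\DEG m_I$ and $\DEG m_J$.

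For part~(1), I would transport $a, b$ to cycles $\tilde a, \tilde b \in T_\bullet \otimes_S \kk$ of multidegrees $\alpha, \beta$. Each such cycle is a $\kk$-linear combination of basis elements $\genaE{I}$ with $\DEG m_I$ equal to $\alpha$ (resp.\ $\beta$). If $\alpha$ and $\beta$ share a positive coordinate, then $\gcd(m_I, m_J)$ is a non-constant monomial for every pair of indices appearing in $\tilde a$ and $\tilde b$, and therefore vanishes under $\otimes_S \kk$. Hence $\tilde a \cdot \tilde b = 0$ already on the chain level, yielding $a \cdot b = 0$ in homology.

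For part~(2), since $a, b$ are coprime we have $m_{\set{a,b}} = ab$, so the multiplication formula gives $\genE{a} \cdot \genE{b} = \pm \genaE{\set{a,b}}$, and the problem reduces to deciding when this cycle is a boundary. For the \emph{if} direction, assume a generator $c \neq a, b$ divides $ab$. By minimality of $c$ and coprimality of $a, b$, there is a unique factorization $c = c_a c_b$ with $c_a \mid a$, $c_b \mid b$, $c_a \neq a$ and $c_b \neq b$. Expanding the Taylor differential of $\genaE{\set{a,b,c}}$ produces three terms with coefficients $ab / \lcm(b, c) = a / c_a$, $ab / \lcm(a, c) = b / c_b$, and $ab / \lcm(a,b) = 1$; only the last survives in $T_\bullet \otimes_S \kk$, realizing $\pm \genaE{\set{a,b}}$ as a boundary. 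For the \emph{only if} direction, if no such $c$ exists then $G_\ub = \set{a,b}$ with $\ub = \DEG(ab)$, and any chain of homological degree $3$ in multidegree $\ub$ must be supported on triples $C \subseteq G_\ub$, which is impossible since $|G_\ub| = 2$; therefore $\genaE{\set{a,b}}$ cannot be a boundary.

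I expect the only fussy step to be tracking the signs in the Taylor differential and product, but the arguments themselves rest on nothing deeper than the observation that non-constant monomials vanish in $\kk$, so no genuine obstacle is anticipated.
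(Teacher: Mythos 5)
Your proof is correct. Part (2) is essentially the paper's own argument: the same expansion of $\partial\genaE{\set{a,b,c}}$ with only the $\genaE{\set{a,b}}$ term surviving over $\kk$ (your factorization $c=c_ac_b$ with $c_a\neq a$, $c_b\neq b$ just makes explicit the minimality point the paper leaves implicit), and the same multidegree-support argument for the converse. Part (1) takes a genuinely different, if modest, route: the paper first polarizes to the squarefree case and then invokes the fact that the Koszul homology of a squarefree monomial ideal is concentrated in squarefree multidegrees, so the product class lives in a multidegree where homology vanishes; you instead show the product of homogeneous representatives vanishes already on the chain level in $T_\bullet\otimes_S\kk$, since every coefficient $m_Im_J/m_{I\cup J}=\gcd(m_I,m_J)$ is a non-constant monomial when the multidegrees are not orthogonal. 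Your version avoids polarization and any appeal to Hochster-type support results, at the price of having to transport the classes through the zigzag of multigraded quasi-isomorphisms identifying $H_*(K_R)$ with $H_*(T_\bullet\otimes_S\kk)$ as multigraded algebras (which the paper has already set up, so this is harmless); the paper's version is shorter once the squarefree reduction is granted.
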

\begin{proof}
	Both claims are invariant under polarization, so we assume that $\af$ is squarefree.
	In this case the first claim is clear, because $a \cdot b$ would have a non-squarefree multidegree.
	
	So consider the second claim. 
	Assume there exist a generator $c \in G(\af)$ which divides $\lcm(a,b)$ and choose an order on $G(\af)$ such that $a \prec b \prec c$. Then 
	\[ \partial \gen{a,b,c} = \frac{a}{\gcd(a,c)} \gen{b,c} - \frac{b}{\gcd(b,c)} \gen{a,c} + \gen{a,b}, \]
	so $\genE{a} \cdot \genE{b} = \genE{a,b} = \partial \genE{a,b,c}$ is a boundary in $T_\bullet \otimes_S \kk$.
	On the other hand, if there does not exist such a $c$, then $\genE{a,b}$ is the only generator of $T_\bullet \otimes_S \kk$ in the multidegree $\DEG\lcm(a,b)$, and thus it cannot be a boundary.
\end{proof}

Our next Lemma gives a combinatorial description of ternary Massey products of elements in homological degree one.
\begin{lemma}\label{lem:3mas}
	Let $a,b,c \in G(\af)$ and assume that $a \prec b \prec c$.
	\begin{enumerate}
		\item If $\mu_3(\cls{\genE{a}}, \cls{\genE{b}}, \cls{\genE{c}})$ is defined and contains not only zero, then $a,b$ and $c$ are pairwise coprime and there exist elements $ab, bc \in G(\af) \setminus\set{a,b,c}$ such that $ab \mid \lcm(a,b)$ and $bc \mid \lcm(b,c)$.
		\item If these conditions are satisfied, then $\mu_3(\cls{\genE{a}}, \cls{\genE{b}}, \cls{\genE{c}})$ is defined and contains the class $\cls{-\genE{a,ab,b,c} - \genE{a,b,bc,c}}$. 
		
		In particular, if $\ub := \DEG\lcm(a,b,c)$, then $\mu_3(\cls{\genE{a}}, \cls{\genE{b}}, \cls{\genE{c}})$ is non-trivial if the simplicial cochain
		\[ e^*_{G_\ub \setminus\set{a,ab,b,c}} + e^*_{G_\ub \setminus\set{a,b,bc,c}} \]
		is not a coboundary in $\Fc_\ub$.
	\end{enumerate}
\end{lemma}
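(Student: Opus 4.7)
The plan is to work in the Taylor DGA $T_\bullet \otimes_S \kk$, combining \cref{prop:fiber} with \cref{lem:paar}. All computations take place in this DGA.

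For part (1), suppose $\mu_3(\cls{\genE{a}}, \cls{\genE{b}}, \cls{\genE{c}})$ is defined and contains a non-zero class. Any such class has multidegree $\DEG a + \DEG b + \DEG c$. Arguing as in the proof of \cref{lem:paar}, we may polarize to reduce to the case that $\af$ is squarefree, so every non-trivial multidegree in $T_\bullet \otimes_S \kk$ is squarefree; this forces $a, b, c$ to be pairwise coprime. Consequently $\genE{a} \cdot \genE{b} = \pm \genE{a, b}$ and $\genE{b} \cdot \genE{c} = \pm \genE{b, c}$, and for $\mu_3$ to be defined these products must be boundaries. \Cref{lem:paar}(2) then delivers the asserted generators $ab$ and $bc$.

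For part (2), I would build a defining system with $a_{11} := \genE{a}$, $a_{22} := \genE{b}$, $a_{33} := \genE{c}$, and $a_{12}$, $a_{23}$ chosen as appropriately signed multiples of $\genE{a, ab, b}$ and $\genE{b, bc, c}$. The crucial calculation is
\[
\partial \genE{a, ab, b} \;=\; \pm \, \genE{a, b} \quad \text{in } T_\bullet \otimes_S \kk,
\]
and analogously for $\partial \genE{b, bc, c}$. A priori the Taylor boundary of $\genE{a, ab, b}$ has three terms, but the two involving $\genE{a, ab}$ and $\genE{ab, b}$ carry the monomial coefficients $\lcm(a,b)/\lcm(a, ab)$ and $\lcm(a,b)/\lcm(ab, b)$, both of which are non-constant and hence vanish after tensoring with $\kk$: indeed, by minimality of the generators of $\af$ we have $a \nmid ab$ and $b \nmid ab$, so neither $\lcm(a, ab)$ nor $\lcm(ab, b)$ can coincide with $\lcm(a, b)$.

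With these fillings in hand, the pairwise coprimality of $a, b, c$ (and of $ab$ with $c$, and $bc$ with $a$) ensures that the Taylor product coefficients $m_I m_J / m_{I \cup J}$ arising in $a_{11} \cdot a_{23}$ and $a_{12} \cdot a_{33}$ all equal $1$. A Massey product representative therefore has the form $\pm \genE{a, b, bc, c} \pm \genE{a, ab, b, c}$, and a careful sign bookkeeping against the order $a \prec \dots \prec ab \prec \dots \prec b \prec \dots \prec bc \prec \dots \prec c$, tracking the signs of the Taylor differential, the Taylor product, and the factors $\bar{a}_{ij}$ from the Massey product formula, yields precisely $-\genE{a, ab, b, c} - \genE{a, b, bc, c}$.

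The ``in particular'' statement then follows from \cref{prop:fiber}: in multidegree $\ub = \DEG \lcm(a, b, c)$, the strand of $T_\bullet \otimes_S \kk$ is identified with the simplicial cochain complex of $\Fc_\ub$, and our representative corresponds (up to a global sign) to $e^*_{G_\ub \setminus \set{a, ab, b, c}} + e^*_{G_\ub \setminus \set{a, b, bc, c}}$. If this cochain is not a coboundary in $\Fc_\ub$, the Massey product must contain a non-zero element. The main obstacle in the argument is the sign bookkeeping needed to pin down the representative to exactly $-\genE{a, ab, b, c} - \genE{a, b, bc, c}$; everything else reduces to the combinatorics of the Taylor DGA together with \cref{lem:paar}.
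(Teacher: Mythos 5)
Your proposal is correct and follows essentially the same route as the paper: reduce to the squarefree case, get pairwise coprimality from multidegree considerations, obtain $ab$ and $bc$ via \cref{lem:paar}, build the defining system from $\genE{a,ab,b}$ and $\genE{b,bc,c}$ (whose Taylor boundaries collapse to $\pm\genE{a,b}$ and $\pm\genE{b,c}$ over $\kk$ because the other coefficients are non-constant monomials), and conclude with \cref{prop:fiber}. The only difference is cosmetic: the paper pins down the signs by fixing the order $a \prec ab \prec b \prec bc \prec c$, whereas you defer the sign bookkeeping, which is at the same level of detail as the paper's own argument.
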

\begin{proof}
	Again, we may assume that $\af$ is squarefree.
	\begin{asparaenum}
		\item If $a,b$ and $c$ are not pairwise coprime, then the elements of the ternary Massey product
		$\mu_3(\cls{\genE{a}}, \cls{\genE{b}}, \cls{\genE{c}})$ have a non-squarefree multidegree and hence the Massey product contains only zero, contradicting our assumption.
		Moreover, if there does not exist an $ab \in G(\af)$ with the claimed properties, then the strand of $T_\bullet \otimes_S \kk$ in the multidegree $\DEG \genE{a,b}$ contains only the vector space spanned by $\genE{a,b} = \genE{a} \cdot \genE{b}$, so this is not a boundary. 
		Hence $\mu_3(\cls{\genE{a}}, \cls{\genE{b}}, \cls{\genE{c}})$  is not defined, contradicting the hypothesis.
		By the same argument, the element $bc$ exists.
		
		\item Assume that $a \prec ab \prec b \prec bc \prec c$.
		By the preceding lemma, we have that $\partial\genE{a,ab,b} = - \genE{a,b}$ and $\partial\genE{b,bc,c} = - \genE{b,c}$. Hence the class of $- \genE{a}\cdot\genE{b,bc,c} - \genE{a,ab,b}\cdot\genE{c} = - \genE{a,ab,b,c} - \genE{a,b,bc,c}$ is a Massey product of $\cls{\genE{a}}, \cls{\genE{b}}$ and $\cls{\genE{c}}$.
		The last statement is immediate from \Cref{prop:fiber}.
	\end{asparaenum}
\end{proof}

\section{Discussion of the example} \label{sec:example}
\begin{figure}[t]
\begin{tikzpicture}[scale=1.5, every node/.style=normal]
	\begin{scope}
		\fill[gray!30] (1,0)-- (60:1) -- (120:1) -- (180:1) -- (240:1) -- (300:1) -- cycle;
		
		\path (0,0)
			node[label=above:a] (a) at (60:1) {}
			node[label=right:ab] (ac) at (0:1) {}
			node[label=below:b] (c) at (300:1) {}
			node[label=below:bc] (bc) at (240:1) {}
			node[label=left:c] (b) at (180:1) {}
			node[label=above:ca] (ab) at (120:1) {};
		
		\path (0,0) node[label=below left:$\abSc$] (bac) at (0,0) {};
	
		\draw (ab) -- (b) -- (bc) -- (c) -- (ac) -- (a) -- (ab);
		\draw (b) -- (bac) -- (ac);
		\draw (a)  -- (bac) -- (c);
	\end{scope}
	\begin{scope}[xshift=3cm]
		\fill[gray!30] (1,0)-- (60:1) -- (120:1) -- (180:1) -- (240:1) -- (300:1) -- cycle;
		
		\path (0,0)
			node[label=above:a] (a) at (60:1) {}
			node[label=right:ab] (ac) at (0:1) {}
			node[label=below:b] (c) at (300:1) {}
			node[label=below:bc] (bc) at (240:1) {}
			node[label=left:c] (b) at (180:1) {}
			node[label=above:ca] (ab) at (120:1) {};
		
		\path (0,0) node[label=left:$\bcSa$] (abc) at (0,0) {};
	
		\draw (ab) -- (b) -- (bc) -- (c) -- (ac) -- (a) -- (ab);
		\draw (ab) -- (abc) -- (bc);
		\draw (a)  -- (abc) -- (c);
	\end{scope}
\end{tikzpicture}
\caption{The cellular resolution of $S/\af$. This is a $3$-ball with one $3$-cell and the figure shows the top and the bottom of this ball.}
\label{fig:cellres}
\end{figure}
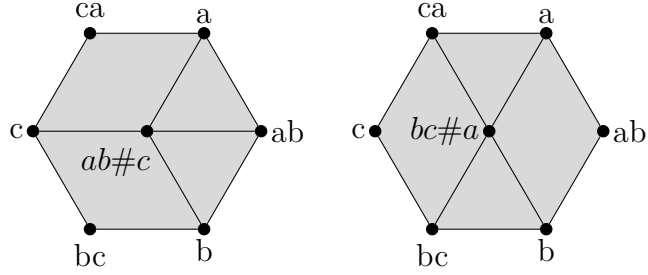
\newcommand{\ga}[1]{m_{#1}}
In this section, we prove our main result:
\begin{theorem}\label{thm:main}
	Let $\kk$ be a field, $S = \kk[x_1,x_2,y_1,y_2,z]$ and let $\af \subset S$ be the ideal with the following generators:
	\begin{align*}
		\ga{a} &:= x_1x_2^2 & \ga{ab} &:= x_1x_2y_1y_2  & \ga{\abSc} &:= x_1y_1z\\
		\ga{b} &:= y_1y_2^2 & \ga{bc} &:= y_2^2z^2 		& \ga{\bcSa} &:= x_2^2y_2^2z\\
		\ga{c} &:= z^3 	 & \ga{ca} &:= x_2^2z^2
	\end{align*}
	Then the product in $H_*(K_{S/\af}) = \Tor^S_*(S/\af, \kk)$ is trivial, but $S/\af$ is not Golod.
	More precisely, the ternary Massey product $\mu_3(\cls{\genE{\ga{a}}}, \cls{\genE{\ga{b}}}, \cls{\genE{\ga{c}}})$ is nonzero.
\end{theorem}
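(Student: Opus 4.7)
The strategy will be to work in the Taylor DGA $T_\bullet \otimes_S \kk$, which computes $H_*(K_R)$ together with its product and higher Massey products by \cref{prop:DGAmap}(2). I split the proof into two parts: (A) every binary product of two positive-degree classes vanishes, and (B) the asserted ternary Massey product is non-zero.

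For part (A), I first apply \cref{lem:paar}(1) to reduce to pairs of cocycles with orthogonal multidegree supports. The classes in $H_1$ are represented by $\genE{m}$ for $m \in G(\af)$, and an inspection of the generators yields exactly six coprime pairs: $(m_a, m_b)$, $(m_a, m_c)$, $(m_a, m_{bc})$, $(m_b, m_c)$, $(m_b, m_{ca})$, and $(m_c, m_{ab})$. For each such pair one of the remaining generators divides the $\lcm$---for instance $m_{ab} \mid \lcm(m_a, m_b)$, $m_{ca} \mid \lcm(m_a, m_c)$ and $\lcm(m_a, m_{bc})$, $m_{bc} \mid \lcm(m_b, m_c)$ and $\lcm(m_b, m_{ca})$, and $m_{\abSc} \mid \lcm(m_c, m_{ab})$---so \cref{lem:paar}(2) kills the corresponding products. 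Products involving a class in $H_j$ with $j \geq 2$ are handled by a finite case analysis: using \cref{prop:fiber} I would enumerate the multidegrees $\ub$ with non-trivial $\tilde H^*(\Fc_\ub)$, and check for each pair of such degrees with disjoint supports that the product cocycle becomes a coboundary in the target strand.

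For part (B), I invoke \cref{lem:3mas}(2) with $ab := m_{ab}$ and $bc := m_{bc}$: indeed $m_a, m_b, m_c$ are pairwise coprime, and $m_{ab} \mid \lcm(m_a, m_b) = x_1 x_2^2 y_1 y_2^2$, $m_{bc} \mid \lcm(m_b, m_c) = y_1 y_2^2 z^3$. The lemma then shows that $\mu_3(\cls{\genE{m_a}}, \cls{\genE{m_b}}, \cls{\genE{m_c}})$ is defined and contains the class of $-\genE{m_a, m_{ab}, m_b, m_c} - \genE{m_a, m_b, m_{bc}, m_c}$. By \cref{prop:fiber}, non-vanishing of this class amounts to showing that the simplicial cocycle $e^*_{G_\ub \setminus \{m_a, m_{ab}, m_b, m_c\}} + e^*_{G_\ub \setminus \{m_a, m_b, m_{bc}, m_c\}}$ is not a coboundary in $\Fc_\ub$, where $\ub := \lcm(G(\af)) = x_1 x_2^2 y_1 y_2^2 z^3$ and $G_\ub = G(\af)$.

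The main obstacle is this last cohomological step. I would first describe $\Fc_\ub$ combinatorially: its faces are the subsets $I \subseteq G(\af)$ whose complement still has $\lcm$ equal to $\ub$, which amounts to the constraint $m_c \notin I$ together with four analogous covering conditions for the exponents of $x_1, x_2, y_1$ and $y_2$. The task is then to exhibit a simplicial cycle $\xi$ in $\Fc_\ub$ on which the above cocycle evaluates to $\pm 1$, so that $\xi$ certifies that the corresponding cohomology class is non-zero. The geometric picture in \cref{fig:cellres} hints that such a $\xi$ should arise from a spherical subcomplex that detects the top Tor class; producing it and checking the pairing is the delicate point.
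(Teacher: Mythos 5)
Your overall route is the same as the paper's: pass to the Taylor DGA, kill the degree-one products via \cref{lem:paar}, and reduce the non-vanishing of $\mu_3(\cls{\genE{m_a}},\cls{\genE{m_b}},\cls{\genE{m_c}})$ via \cref{lem:3mas} and \cref{prop:fiber} to the statement that $\omega = e^*_{G_\ub\setminus\set{m_a,m_{ab},m_b,m_c}}+e^*_{G_\ub\setminus\set{m_a,m_b,m_{bc},m_c}}$ is not a coboundary in $\Fc_\ub$ with $\ub=\DEG\lcm(m_a,m_b,m_c)$ and $G_\ub=G(\af)$; your list of the six coprime pairs of generators and the divisors killing each product is correct and complete for the $H_1\times H_1$ part. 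However, the two steps where the actual content of the theorem lies are only announced, not carried out. First, the vanishing of products involving classes of homological degree $\geq 2$ is deferred to a ``finite case analysis'' that you do not perform; to even enumerate the multidegrees with nontrivial $\tilde H^*(\Fc_\ub)$ you need global control of $\Tor^S_*(S/\af,\kk)$, and this part is not vacuous (there are Betti numbers in homological degrees $2$, $3$, $4$). The paper obtains this control by exhibiting a minimal \emph{cellular} resolution supported on the complex of \cref{fig:cellres} (verified over $\QQ$ with \texttt{Macaulay2} and extended to all fields by a torsion-freeness argument, since the supporting complex embeds in $\RR^3$), reads off the Betti table, and then only three explicit families of candidate products remain, each killed by \cref{lem:paar} or by non-orthogonality of multidegrees.

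Second, and more importantly, the non-coboundary assertion for $\omega$ --- which is exactly the claim that $S/\af$ is not Golod --- is left as ``the delicate point.'' The paper closes it by describing $\Fc_\ub$ explicitly (vertex set $G(\af)$, with minimal non-faces $\set{m_b}$, $\set{m_a,m_{ab},m_{\bcSa}}$, $\set{m_a,m_{ca},m_{\caSb}}$, $\set{m_c,m_{bc},m_{\bcSa}}$, $\set{m_c,m_{ca},m_{\caSb}}$) and verifying by an explicit cohomology computation (e.g.\ \texttt{simpcomp}/\texttt{GAP}) that $\omega$ is not a coboundary, with field-independence again supplied by the absence of torsion. Your proposed strategy of exhibiting a simplicial cycle $\xi$ with $\langle\omega,\xi\rangle=\pm1$ would in fact be a fine alternative --- it would also give field-independence for free, since $\pm1$ is a unit in every $\kk$ --- but until such a $\xi$ is produced (or an equivalent computation is done), the proof of non-Golodness, and hence of the theorem, is missing its decisive step.
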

	In the first version of this article a slightly more complicated example was presented.
	After that version appeared on ArXiv, Roos pointed out that that example could be simplified to obtain the ideal given above.
	The names of the generators are chosen in accordance with the discussion in the proof of \Cref{thm:min}.

\renewcommand{\ga}[1]{{#1}}

\begin{proof}
To simplify the notation, we write $a,b,ab,\dots$ instead of $m_a, m_b, m_{ab}, \dotsc$. In particular, $ab$ does \emph{not} denote the product of $a$ and $b$, but the generator $m_{ab}$.

Our first goal is to describe the structure of the minimal free resolution of $S/\af$.
For this, consider the cell complex $X$ depicted in \cref{fig:cellres}.
It is a labeled cell complex in the sense of \cite{BS98}, and we claim that the corresponding complex $F_X$ of free $S$-modules is a minimal free resolution of $S/\af$. 
It is clear that its zeroth homology is $S/\af$, so we need to show that its higher homology groups vanish.
By \cite[Proposition 1.2]{BS98}, this holds if and only if $X_{\leq \ab}$ is acyclic over $\kk$, where for $\ab\in\NN^n$, the subcomplex $X_{\leq \ab} \subseteq X$ contains exactly those faces whose labels are coordinatewise less or equal to $\ab$.
Now, $X$ can be embedded into $\RR^3$ and hence none of its induced subcomplexes has torsion in its homology groups (cf. \cite[Corollary 3.45]{hat}).
Therefore, if $F_X$ is acyclic over at least one field, then it is acyclic over any field.

To show the former, we consider the case $\kk=\QQ$.
Over $\QQ$, one can compute a minimal free resolution of $S/\af$ using 
\texttt{Macaulay2} \cite{M2}, and then verify by inspection that this resolution is indeed isomorphic to $F_X$.
Moreover, all entries in the matrices of $F_X$ are non-constant monomials, and hence $F_X$ is minimal.

Thus, $S/\af$ has a minimal cellular resolution supported on $X$ over all fields.
In particular, its Betti diagram does not depend on the field, and it is given by the following:
\begin{center}
	\begin{tabular}{r|rrrrr}
		& 0 & 1 & 2 & 3 & 4 \\\hline
		0& 1 & . & . & . & . \\
		1& . & . & . & . & . \\ 
		2& . & 4 & . & . & . \\
		3& . & 3 &10 & 2 & . \\
		4& . & 1 & 4 & 6 & . \\
		5& . & . & . & . & 1 \\
	\end{tabular}
\end{center}

From
this
one can read off that, for degree reasons, the only products which can possibly be non-zero are the following:
\begin{enumerate}
\item $\genE{i} \cdot \genE{j}$ for $i,j \in \set{\ga{a},\ga{b},\ga{c}}, i \neq j$,
\item $\genE{i} \cdot \genE{\ga{\abSc}}$ for $i \in \set{\ga{a},\ga{b},\ga{c}}$, and
\item $\genE{i} \cdot h$ for $i \in \set{\ga{a},\ga{b},\ga{c}, \ga{\abSc}}$ and $h$ is a generator with $|h| = 3$ and $\deg h = 6$, i.e. the \enquote{2} in the Betti diagram. 
\end{enumerate} 
It holds that $\ga{ij} | \lcm(\ga{i}, \ga{j})$ for $(i,j) \in \set{(a,b),(b,c),(c,a)}$, hence these products are zero by \cref{lem:paar}.
Moreover, $\ga{\abSc}$ is not coprime with $\ga{a}, \ga{b}$ or $\ga{c}$, so the second set of products are zero for degree reasons.
Finally, it is not difficult to see that the generators $h$ with $|h| = 3$ and $\deg h = 6$ correspond to the triangles $\set{a,ab,\abSc}$ and $\set{b,ab,\abSc}$ in \cref{fig:cellres}. 
So their multidegrees are not orthogonal to the multidegree of any $\genE{i}$ for  $i \in \set{\ga{a},\ga{b},\ga{c}, \ga{\abSc}}$, and hence this products are zero as well.
Thus, all products on $\Tor^S_*(S/\af, \kk)$ vanish.
Alternatively, one can use the \texttt{Macaulay2} command \texttt{isHomologyAlgebraTrivial} from the package \texttt{DGAlgebras} by Frank Moore (which is distributed along with \texttt{Macaulay2}) to verify this.

Next, we show that the ternary Massey product $\mu_3(\cls{\genE{\ga{a}}},\cls{\genE{\ga{b}}},\cls{\genE{\ga{c}}})$ is non-zero.
It holds that $\ga{ab} | \lcm(\ga{a},\ga{b})$ and $\ga{bc} | \lcm(\ga{b},\ga{c})$, so by \cref{lem:3mas} we need to show that the simplicial cochain 
\[
	\omega :=
	e^*_{G_\ub \setminus\set{\ga{a},\ga{ab},\ga{b},\ga{c}}} + e^*_{G_\ub \setminus\set{\ga{a},\ga{b},\ga{bc},\ga{c}}} 
\]
is not a coboundary in $\Fc_\ub$, where $\ub := \DEG\lcm(\ga{a},\ga{b},\ga{c})$.
Note that $G_\ub = G(\af)$.
To obtain an explicit description of $\Fc_\ub$, we note that a set $I \subseteq G(\af)$ is contained in $\Fc_\ub$ if and only if for each variable, $I$ contains a generator having the maximal degree in this variable.
By using this description of $\Fc_\ub$, we find
that $\Fc_\ub$ is the simplicial complex with vertex set $G(\af)$ and minimal non-faces $\set{\ga{b}}\footnote{
Note that $b$ is not a vertex of $\Fc_\ub$, even though it is contained in the vertex set. Some authors call such an element a \enquote{ghost} vertex.}
, \set{\ga{a},\ga{ab},\ga{\bcSa}}, \set{\ga{a},\ga{ac},\ga{\caSb}}, \set{\ga{c},\ga{bc},\ga{\bcSa}}$ and $\set{\ga{c},\ga{ac},\ga{\caSb}}$.

The verification that $\omega$ is not a coboundary can be done with any software system capable of computing simplicial cohomology, for example using the package \texttt{simpcomp} \cite{simpcomp} for the \texttt{GAP} system \cite{GAP}.
Moreover, the Betti numbers of $\Fc_\ub$ equal the multigraded Betti numbers of $S/\af$ in degree $\ub$ and thus they do not depend on the field $\kk$. Hence, the cohomology of $\Fc_\ub$ has no torsion and so our claim holds independently of the field.
\end{proof}

\begin{remark}
	The Poincar\'e-Betti series of our example can be computed using the formula given in \cite[Theorem 1]{B06}.
	Its specialization to the $\ZZ$-grading is given by
	\[
		P_\kk^R(t,z, \dotsc, z) = \frac{(1+ zt)^5}{1 - t((4 z^3 + 3 z^4 + z^5)t + (10z^5 + 4 z^6)t^2 +  (2z^6 + 6 z^7) t^3 - z^9 t^5)}.
	\]
	Evaluating both sides of \cref{eq:ps} using the preceding expression and the Betti table of $S/\af$ yields the following:
	\begin{align*}
	P_\kk^R(t,1,\dotsc,1) &= 1 + 5t + 18 t^2 + 64 t^3 + 227 t^4 + {805} t^5 + \dotsb\\
	Q_\kk^R(t,1,\dotsc,1) &= 1 + 5t + 18 t^2 + 64 t^3 + 227 t^4 + {806} t^5 + \dotsb
	\end{align*}
	These series differ, reflecting the fact that $S/\af$ is not Golod.
\end{remark}

\begin{remark}
		The polarization of the ideal in \cref{thm:main} is the Stanley-Reisner ideal of some simplicial complex $\Delta$ of dimension $5$.
		By taking its $4$-skeleton, one obtains an example of a $4$-dimensional simplicial complex $\Gamma$, such that $\kk[\Gamma]$ is not Golod but has a trivial product in its Koszul homology.
		Indeed, the product stays trivial under taking the skeleton by \cite[Corollary 5.1]{Golod1}, and the non-vanishing Massey product is a $4$-cycle, so it cannot become a boundary when we remove simplices of higher dimension from $\Delta$.
		In particular, this shows that part (7) of \cref{thm:classes} below is best possible.
\end{remark}

\section{A bound in terms of regularity}\label{sec:reg}
In this section we show a weaker version of \cref{thm:wrong}:

\begin{theorem}\label{thm:reg}
	Let $\af \subset S$ be a monomial ideal, let $R := S/\af$ and let $r := \max(2, \reg R - 2)$.
	If $K_R$ satisfies $(B_{r})$, then the ring is Golod.
\end{theorem}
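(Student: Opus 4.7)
The plan is to prove the theorem by induction on the Massey product arity $s$, establishing $(B_s)$ for every $s \geq 2$. The base case $s \leq r$ is the hypothesis. For the inductive step, I would suppose $(B_{s-1})$ for some $s > r$, and show every $s$-ary Massey product vanishes.

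By \cref{lemma:unique}, $(B_{s-1})$ guarantees that $\mu_s(a_1,\dotsc,a_s)$ is defined and consists of a single element for every homogeneous input. Denote it by $a$. The multigrading yields
\[
	|a| = \sum_{i=1}^s |a_i| + (s-2), \qquad \DEG a = \sum_{i=1}^s \DEG a_i,
\]
so $\deg a - |a| = \sum_i (\deg a_i - |a_i|) - (s-2)$. Since $\af$ may be assumed to have minimal generators in degree $\geq 2$ (otherwise a linear generator is a variable which splits off), each $\deg a_i - |a_i| \geq 1$. For $a$ to represent a nonzero class in $\Tor^S_{|a|}(R,\kk)_{\deg a}$, the definition of regularity forces $\deg a - |a| \leq \reg R$, i.e., $\sum_i (\deg a_i - |a_i|) \leq \reg R + s - 2$.

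The degree inequalities by themselves only yield the trivial bound $\reg R \geq 2$, so the substantive step is a combinatorial computation of $a$ in the Taylor DGA $T_\bullet \otimes_S \kk$. The idea is to build a defining system $\set{a_{ij}}$ whose intermediate terms exist because $(B_{s-1})$ provides explicit bounding chains for each lower Massey element. Via \cref{prop:fiber}, the resulting Massey cycle is identified with an explicit simplicial cocycle on $\Fc_\ub$ for $\ub = \DEG a$. The inductive vanishing, realised as bounding cochains in the strands $(T_\bullet \otimes_S \kk)_{\DEG a_{ij}}$, is to be assembled into a single cochain on $\Fc_\ub$ whose coboundary is the Massey cocycle. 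The condition $s > \reg R - 2$ is precisely what forces the dimension of the cocycle on $\Fc_\ub$ into a range where such an assembly is possible.

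The principal obstacle is this last combinatorial step: fitting the bounding cochains from the inductive hypothesis together, while correctly tracking the signs coming from the Taylor DGA multiplication and the recursive definition of the $a_{ij}$. A conceptually cleaner alternative would be to pass to a minimal $A_\infty$-model on $H_*(K_R)$ transferred from $T_\bullet \otimes_S \kk$: the hypothesis $(B_r)$ forces $m_2 = \dotsb = m_r = 0$, and the regularity constraint bounds the multidegree of any potentially nontrivial $m_s(a_1,\dotsc,a_s)$, yielding $m_s = 0$ for $s > r$ by essentially the same degree count applied to a uniquely determined operation.
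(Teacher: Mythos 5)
There is a genuine gap, and you in fact point at it yourself: the degree count with only $\deg a_i - |a_i| \geq 1$ per factor gives $\deg a - |a| \geq s-(s-2)=2$, which is vacuous, and everything then hinges on the unproven ``assembly'' step. Neither the proposed fitting-together of bounding cochains on $\Fc_\ub$ nor the $A_\infty$ reformulation supplies the missing input: in the $A_\infty$ model the operation $m_s$ satisfies the same degree bookkeeping as the Massey product, so ``essentially the same degree count'' still only yields $\reg R \geq 2$ and cannot force $m_s=0$ for $s>r$. Concretely, any generator of $\af$ of degree $2$ produces Koszul classes with $\deg a_i - |a_i| = 1$, and as long as such classes are allowed as factors, no regularity bound of the form $s \leq \reg R - 2$ can come out of multidegree considerations alone.

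What the paper proves, and what your argument is missing, is precisely the statement that these $2$-linear-strand classes cannot occur as factors of a nonzero higher Massey product: if $K_R$ satisfies $(B_{s-1})$ and some $a_i$ has $\deg a_i = |a_i|+1$, then $\mu_s(a_1,\dotsc,a_s)=\set{0}$ (\cref{thm:zero}). This upgrades the per-factor bound to $\deg a_i - |a_i| \geq 2$, whence $\deg a - |a| \geq s+2$ and the regularity bound gives $s \leq \reg R - 2$, completing exactly the induction you set up (using \cref{lemma:unique} as you do). But \cref{thm:zero} is not a degree argument: it is proved by restricting to a minimal vertex set on which the Massey class survives and invoking a topological lemma (\cref{lem:minimal}) saying that a cohomology class vanishing on all proper induced subcomplexes of a non-$2$-neighborly complex forces a nonzero product in Koszul homology, contradicting $(B_2)$. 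Your proposal contains no substitute for this step; the ``principal obstacle'' you name is exactly where this new idea is required, and the heuristic that the condition $s > \reg R - 2$ pushes the cocycle into an assemblable dimension range is not substantiated and, without excluding $2$-linear factors, is false as a degree statement.
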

Recall that $K_R$ satisfies $(B_r)$ if all $k$-ary Massey products are defined and contain only zero for all $k \leq r$.
We prove \cref{thm:reg} at the end of this section.
First, we need to introduce some notation.
Let $\Delta$ be a simplicial complex with vertex set $V$.
The Stanley-Reisner ideal of $\Delta$ is the ideal
\[ I_\Delta = \left(\prod_{v \in M} X_v \with M \subseteq V, M \notin \Delta \right) \]
in the polynomial ring $S = \kk[X_v \with v \in V]$.
The Stanley-Reisner ring of $\Delta$ is then the quotient $\kk[\Delta] = S/I_\Delta$, cf. \cite[Chapter 5]{BH}.
For a subset $U \subseteq V$, we denote by 
$\restr{\Delta}{U} := \set{F \in \Delta \with F \subseteq U}$
the restriction of $\Delta$ to $U$.
\begin{lemma}\label{lemma:restrict}
	Let $\Delta$ be a simplicial complex such that $K_{\kk[\Delta]}$ satisfies $(B_r)$.
	Then the same holds for every restriction of $\Delta$.
\end{lemma}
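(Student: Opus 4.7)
The plan is to realize $\kk[\restr{\Delta}{U}]$ as an algebra retract of $\kk[\Delta]$ and then to transfer the vanishing of Massey products through this retraction. For the construction, I would take $\iota \colon \kk[\restr{\Delta}{U}] \to \kk[\Delta]$ to be the ring map induced by the polynomial inclusion $\kk[X_v : v \in U] \hookrightarrow S$, and $\pi \colon \kk[\Delta] \to \kk[\restr{\Delta}{U}]$ to be induced by $X_v \mapsto X_v$ for $v \in U$ and $X_v \mapsto 0$ for $v \notin U$. Well-definedness of both maps reduces to the observation that for $M \subseteq U$, the monomial $\prod_{v \in M} X_v$ lies in $I_\Delta$ if and only if $M \notin \Delta$, if and only if it lies in $I_{\restr{\Delta}{U}}$. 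Plainly $\pi \iota = \mathrm{id}$. Extending by $e_v \mapsto e_v$ for $v \in U$ (respectively $e_v \mapsto 0$ for $v \notin U$), these ring maps lift to DGA maps between the corresponding Koszul complexes, still satisfying $\pi \iota = \mathrm{id}$; in particular, $\iota_*$ is a split monomorphism on Koszul homology.

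With the retraction in hand, I would prove $(B_r)$ for $K_{\kk[\restr{\Delta}{U}]}$ by induction on $r$. The base case $r = 2$ is immediate: for $b_1, b_2 \in H_*(K_{\kk[\restr{\Delta}{U}]})$ one has $\iota_*(b_1 b_2) = \iota_*(b_1)\iota_*(b_2) = 0$ by $(B_2)$ for $K_{\kk[\Delta]}$, and injectivity of $\iota_*$ forces $b_1 b_2 = 0$. For the inductive step, assume the statement for $r - 1$, so that $K_{\kk[\restr{\Delta}{U}]}$ already satisfies $(B_{r-1})$. Then by \cref{lemma:unique}, every $r$-ary Massey product $\mu_r(b_1, \ldots, b_r)$ in $H_*(K_{\kk[\restr{\Delta}{U}]})$ is defined and consists of a single element $c$. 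Applying \cref{prop:DGAmap}(1) to $\iota$ yields $\iota_*(c) \in \mu_r(\iota_*(b_1), \ldots, \iota_*(b_r)) = \{0\}$ by $(B_r)$ for $K_{\kk[\Delta]}$, and injectivity of $\iota_*$ forces $c = 0$.

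The main obstacle is setting up the retraction and checking that it promotes to a DGA retraction on Koszul complexes; once this is in place, the rest is a routine transfer argument using the tools of \cref{sec:prelim}. A more combinatorial alternative would be to construct the analogous retraction at the level of the Taylor DGA, using the inclusion $G(I_{\restr{\Delta}{U}}) \subseteq G(I_\Delta)$ of minimal generating sets; the same induction then goes through verbatim after invoking the quasi-isomorphism $T_\bullet \otimes_S \kk \simeq K_{\kk[\Delta]}$ together with \cref{prop:DGAmap}(2).
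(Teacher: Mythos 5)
Your proof is correct and takes essentially the same route as the paper: realize $\kk[\restr{\Delta}{U}]$ as an algebra retract of $\kk[\Delta]$, lift $\iota$ and $p$ to DGA maps of Koszul complexes, and transfer vanishing through the split injection $\iota_*$ via \cref{prop:DGAmap}. The only (harmless) difference is how definedness of the Massey products on the restriction is obtained: the paper pushes the defined products forward along $K_p$ using \cref{prop:DGAmap}(1), whereas you get definedness by induction on $r$ combined with \cref{lemma:unique}; both arguments work.
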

\begin{proof}
	Let $U \subseteq V$ be a subset.
	The Stanley-Reisner ring of $\Delta|_U$ is an algebra retract of $\kk[\Delta]$, i.e. there are maps of algebras $\iota: \kk[\Delta|_U] \to \kk[\Delta]$ and $p: \kk[\Delta] \to \kk[\Delta|_U]$ such that $p \circ \iota = \mathrm{id}$.
	These maps induce maps on the Koszul complexes $K_\iota: K_{\kk[\Delta|_U]} \to K_{\kk[\Delta]}$ and $K_p: K_{\kk[\Delta]} \to K_{\kk[\Delta|_U]}$, 
	which are maps of DGAs and satisfy $K_p \circ K_\iota = \mathrm{id}$.
	
	Let $a_1, \dotsc, a_k \in H_*(K_{\kk[\Delta|_U]})$ for some $k \leq r$.
	By our assumption that $K_{\kk[\Delta]}$ satisfies $(B_r)$, $\mu_k((K_\iota)_*(a_1), \dots (K_\iota)_*(a_k))$ is defined and contains only zero.
	Hence \Cref{prop:DGAmap} implies that
	\[\mu_k((K_p \circ K_\iota)_*(a_1), \dots (K_p \circ K_\iota)_*(a_k)) = \mu_r(a_1, \dotsc, a_k) \]
	is defined as well and contains zero.
	Moreover, it contains only zero, because $(K_\iota)_*$ is an injective map from $\mu_r(a_1, \dotsc, a_k)$ to $\mu_k((K_\iota)_*(a_1), \dots (K_\iota)_*(a_k)) = \set{0}$.
\end{proof}

The next lemma is the key step in our proof of \cref{thm:reg}.
\begin{lemma}\label{lem:minimal}
	Let $\Delta$ be a simplicial complex.
	Assume that there exists a nonzero cohomology class $\alpha \in H^d(\Delta;\kk)$ for some $d \geq 1$, such that the restriction of $\alpha$ to any induced subcomplex of $\Delta$ is zero.
	
	If $\Delta$ is not $2$-neighborly, then $K_{\kk[\Delta]}$ does not satisfy $(B_2)$, i.e. there exists a non-vanishing product in the Koszul homology.
\end{lemma}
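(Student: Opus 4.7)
My plan is to use Mayer--Vietoris together with Hochster's formula to realize $\alpha$, viewed as a class in the Koszul homology of $\kk[\Delta]$, as a nontrivial binary product. Since $\Delta$ is not $2$-neighborly, I would first fix two vertices $u, v \in V$ with $\{u, v\} \notin \Delta$ and set $W := V \setminus \{u, v\}$. Because no face of $\Delta$ contains both $u$ and $v$, the deletions $\Delta_u := \Delta|_{V \setminus \{u\}}$ and $\Delta_v := \Delta|_{V \setminus \{v\}}$ satisfy $\Delta = \Delta_u \cup \Delta_v$ and $\Delta_u \cap \Delta_v = \Delta|_W$. The Mayer--Vietoris sequence
\[ \tilde{H}^{d-1}(\Delta|_W;\kk) \xrightarrow{\delta} \tilde{H}^d(\Delta;\kk) \to \tilde{H}^d(\Delta_u;\kk) \oplus \tilde{H}^d(\Delta_v;\kk), \]
together with the hypothesis that $\alpha$ restricts trivially to each of $\Delta_u$ and $\Delta_v$ (both are induced subcomplexes), then produces a nonzero class $\beta \in \tilde{H}^{d-1}(\Delta|_W;\kk)$ with $\delta(\beta) = \alpha$.

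Next I would translate to Koszul homology by Hochster's formula: $\alpha$ and $\beta$ correspond to classes $\tilde\alpha, \tilde\beta \in H_*(K_{\kk[\Delta]})$ in multidegrees $V$ and $W$ respectively, of homological degrees $|V|-d-1$ and $|V|-d-2$. Since $\{u,v\}$ is a non-face, $x_u x_v = 0$ in $\kk[\Delta]$, so $x_u e_v$ is a Koszul cycle whose class $\tilde\gamma \in H_1(K_{\kk[\Delta]})$ is nonzero in multidegree $\{u,v\}$ (it represents a generator of $\tilde{H}^0(\Delta|_{\{u,v\}};\kk) \cong \kk$). The crux of the proof will be to show
\[ \tilde\gamma \cdot \tilde\beta \;=\; \pm\, \tilde\alpha, \]
which is nonzero by construction.

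I expect this last identification of the Koszul product with the Mayer--Vietoris connecting map to be the main obstacle. I would argue it at the cochain level: Hochster's formula identifies the Koszul strand in a squarefree multidegree $U$ with the reduced simplicial cochain complex of $\Delta|_U$ (up to shift and reversal), and for disjoint multidegrees $U_1, U_2$ the Koszul product sends face-cochains $F_1^*, F_2^*$ to $\pm (F_1 \cup F_2)^*$, or to zero when $F_1 \cup F_2 \notin \Delta$. Taking $U_1 = \{u,v\}$, $U_2 = W$, with $\tilde\gamma$ represented by $\{u\}^*$ and $\tilde\beta$ represented by a cocycle $\gamma$ on $\Delta|_W$, one sees that $\tilde\gamma \cdot \tilde\beta$ is represented by the cochain on $\Delta$ whose value on a face $F$ is $\pm \gamma(F \setminus \{u\})$ when $u \in F$ and zero otherwise. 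Computing $\delta(\beta)$ from the Mayer--Vietoris short exact sequence by extending $\gamma$ to a cochain on $\Delta_u$ that vanishes on faces containing $u$ and then applying the coboundary yields exactly the same cochain, establishing the claimed identification.

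Finally, to conclude that $\tilde\gamma \cdot \tilde\beta$ is a genuine nonvanishing binary product of positive-degree classes, I would verify that $|\tilde\gamma|, |\tilde\beta| \geq 1$. Clearly $|\tilde\gamma| = 1$. For $|\tilde\beta| = |V| - d - 2$, I would rule out $|V| \leq d + 2$ by a dimension count: any simplicial complex on at most $d$ vertices has vanishing reduced cohomology in degree $d-1$ (it is either the contractible full $(d-1)$-simplex or has dimension strictly less than $d-1$), which would force $\beta = 0$ and contradict $\delta(\beta) = \alpha \neq 0$. Hence $|\tilde\beta| \geq 1$, and $K_{\kk[\Delta]}$ fails $(B_2)$.
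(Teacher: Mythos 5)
Your argument is correct, but it runs along a route dual to the one in the paper. You work cohomologically: cover $\Delta$ by the two vertex deletions $\Delta_u,\Delta_v$ at a non-edge, use Mayer--Vietoris to pull $\alpha$ back to a class $\beta\in\Hi{d-1}{\restr{\Delta}{W}}$, and then identify the Koszul product of $\beta$ with the degree-one class of the non-edge $\set{u,v}$ with the connecting homomorphism $\delta(\beta)=\pm\alpha$, using the cochain-level description of the product in squarefree multidegrees (the Hochster/Baskakov formula $F_1^*\cdot F_2^*=\pm(F_1\cup F_2)^*$). The paper instead works homologically: it invokes the criterion of Iriye and Kishimoto that all Koszul products vanish iff the maps $\Hi{i}{\restr{\Delta}{I\cup J}}\to\Hi{i}{\restr{\Delta}{I}*\restr{\Delta}{J}}$ vanish, takes a cycle $\omega$ pairing nontrivially with $\alpha$, and derives a contradiction by an explicit chain manipulation ($\omega':=d\tau_v$) that produces a cycle supported on a proper vertex subset still pairing nontrivially with $\alpha$. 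The two proofs exploit the same decomposition at a non-edge, and your connecting map is essentially the dual of the paper's map into the join; what your version buys is an explicit exhibition of the two positive-degree classes whose product is $\pm\alpha$, at the cost of having to justify (or cite) the multiplicative part of Hochster's decomposition and check the comparison with the Mayer--Vietoris boundary at cochain level, which you sketch correctly. Two small points to fix in a write-up: the sentence about ``extending $\gamma$ to a cochain on $\Delta_u$ that vanishes on faces containing $u$'' is garbled (faces of $\Delta_u$ never contain $u$; you mean the extension by zero to $\Delta_v$, or equivalently to $\Delta$, supported on faces avoiding $u$), and you should state the product identification with its sign conventions or with a reference, since that is the crux of your argument; your final dimension count ensuring $|\tilde\beta|\geq 1$ is correct and necessary.
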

Here, \enquote{$2$-neighborly} means that every two vertices are connected by an edge.
\begin{proof}
	Let $V$ be the vertex set of $\Delta$.
	For $i \in \NN$ and two non-empty disjoint subsets $I, J \subset V$,
	we write $\phi_i^{I,J}\colon \Hi{i}{\restr{\Delta}{I \cup J}} \rightarrow \Hi{i}{\restr{\Delta}{I} * \restr{\Delta}{J}}$ for the map induced by the inclusion $\restr{\Delta}{I \cup J} \hookrightarrow \restr{\Delta}{I} * \restr{\Delta}{J}$.
	Recall that these maps vanish if and only if all products on the Koszul homology vanish \cite[Proposition 6.3]{IK}.
	For a chain $\tau = \sum_{\sigma \in \Delta} c_\sigma \sigma$ on $\Delta$ we define
	\[ V(\tau) := \set{v \in V \with \exists \sigma \in \Delta: v \in \sigma, c_\sigma \neq 0}. \]
	Our hypothesis implies that
	\begin{equation}\label{eq:min}
		\langle \omega,\alpha \rangle = 0 \text{ for all } \omega \in H_d(\Delta) \text{ with } V(\omega) \neq V.
	\end{equation}
	If $\Delta$ is not $2$-neighborly, then there exist two vertices $v,w$ which are not connected by an edge.
	Set $I := \set{v,w}$ and $J := V \setminus I$.
	Note that $d \geq 1$ implies that $\Delta$ has more than two vertices and hence $J \neq \emptyset$.
	As $\alpha \neq 0$, there exists an $\omega\in H_d(\Delta)$ such that $\langle \omega,\alpha \rangle \neq 0$.
	We claim that $\phi_d^{I,J}(\omega) \neq 0$, and hence there exists a non-vanishing product in the Koszul homology.
	
	Before we prove the claim, we define some auxiliary maps.
	For a chain $\tau = \sum_{\sigma} c_\sigma \sigma$ on $\Delta$ we set
	\begin{align*}
		\tau_v &:= \sum \set{c_\sigma \sigma \with v \in \sigma} \quad\text{and}\\
		\tau^v &:= \sum \set{c_\sigma (\sigma \setminus \set{v}) \with v \in \sigma}.
	\end{align*}

	Choose a linear order on $V$ such that $v$ is the smallest vertex and orient $\Delta$ accordingly.
	Under this convention it holds that $d \tau_v = (d \tau)_v + \tau^v$.
	
	Now we turn to the proof of our claim.
	Assume the contrary, i.e. that $\phi_d^{I,J}(\omega) = 0$.
	Then there exists a $(d+1)$-chain $\tau$ in $\restr{\Delta}{I}*\restr{\Delta}{J}$ such that $d \tau = \omega$.
	Set $\omega' := d \tau_v = \omega_v +  \tau^v$.
	Note that $\omega'$ is in fact a chain on $\Delta$, because both $\omega_v$ and $\tau^v$ are chains on $\Delta$.
	(For the latter, recall that $\tau  \in \restr{\Delta}{I}*\restr{\Delta}{J}$ and $\restr{\Delta}{I}$ are just the two disconnected vertices $v$ and $w$.)
	Further, $v \notin V(\omega - \omega')$, hence by \cref{eq:min} it follows that $\langle \omega-\omega', \alpha\rangle = 0$ and thus
	\[ \langle \omega', \alpha\rangle = \langle\omega, \alpha\rangle \neq 0.\]
	
	As $v$ and $w$ are not connected in $\restr{\Delta}{I}*\restr{\Delta}{J}$, it holds that $w \notin V(\tau_v)$.
	Hence $w \notin V(\omega')$ as well, contradicting \cref{eq:min}.
\end{proof}

From the preceding lemma, we obtain the following result, which we consider to be of independent interest:
\begin{proposition}\label{thm:zero}
	Let $\af \subset S$ be a monomial ideal, let $R := S/\af$ and assume that $K_R$ satisfies $(B_{r-1})$ for some $r \geq 3$.
	Let $a_1, \dots, a_r \in H_*(K_R)$ be elements of the Koszul homology.
	If $\deg a_i = |a_i| + 1$ for some $i$ (i.e. $a_i$ lies in the $2$-linear strand of $K_R$), then $\mu_r(a_1,\dotsc, a_r) =\set{0}$.
\end{proposition}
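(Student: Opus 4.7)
The plan is to polarize to reduce to the squarefree case $R = \kk[\Delta]$ for a simplicial complex $\Delta$ on vertex set $V$; polarization preserves both the $(B_{r-1})$ hypothesis and the linear-strand condition on $a_{i_0}$. By \Cref{lemma:unique}, $\mu_r(a_1,\dots,a_r) = \{\beta\}$ is a singleton, and I aim to show $\beta = 0$. Writing $W_i := \operatorname{supp}(\DEG a_i)$, I may further assume that the $W_i$ are pairwise disjoint, since otherwise $\DEG\beta = \sum_i \DEG a_i$ fails to be squarefree and $\beta = 0$ automatically, as the Koszul homology of a Stanley--Reisner ring is concentrated in squarefree multidegrees. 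Set $W := \bigsqcup_i W_i$ and, using \Cref{lemma:restrict}, pass to $\Delta' := \Delta|_W$, which still satisfies $(B_{r-1})$. By Hochster's formula, $\beta$ then corresponds to a class $[\beta] \in \tilde H^d(\Delta';\kk)$ with
\[ d \;=\; |W| - |\beta| - 1 \;=\; \sum_i (\deg a_i - |a_i|) - (r-1) \;\geq\; 1, \]
the final inequality coming from $\deg a_i - |a_i| \geq 1$ for every $i$.

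Assume for contradiction that $[\beta]\neq 0$. I then verify the two hypotheses of \Cref{lem:minimal} applied to $\Delta'$ and $[\beta]$. Non-$2$-neighborliness of $\Delta'$ is immediate: since $a_{i_0}$ lies in the $2$-linear strand, $|W_{i_0}|=|a_{i_0}|+1$, so Hochster's formula identifies $a_{i_0}$ with a nonzero class in $\tilde H^0(\Delta|_{W_{i_0}};\kk)$, making $\Delta|_{W_{i_0}}$ disconnected and furnishing two non-adjacent vertices inside $W \subseteq V(\Delta')$. The delicate step is to show that $[\beta]$ restricts to zero on every proper induced subcomplex $\Delta|_U$, $U \subsetneq W$. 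The idea is to use the chain-level Massey formula $\sum_v \bar a_{1v} a_{v,r}$ inside $T_\bullet \otimes_S \kk$, selecting the intermediate cochains $a_{ij}$ in multidegree $W_i \cup \dots \cup W_j$. Via \Cref{prop:fiber} and the quasi-isomorphism $T_\bullet \otimes_S \kk \simeq K_R$, this produces a cocycle representative of $[\beta]$ whose support on $\Delta'$---read off the fibered simplicial-cochain picture---consists only of simplices meeting every piece $W_i$; since any $U \subsetneq W$ omits some $W_i$ entirely, no such simplex lies in $\Delta|_U$, and the restriction vanishes already at the cochain level.

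Granted both hypotheses, \Cref{lem:minimal} produces a non-vanishing binary product in $H_*(K_{\kk[\Delta']})$, contradicting $(B_2)$, which $K_{\kk[\Delta']}$ inherits from $K_R$ via \Cref{lemma:restrict} (using $r-1 \geq 2$). Hence $[\beta] = 0$ and thus $\beta = 0$. The main obstacle is the support claim in the previous paragraph: making rigorous the assertion that the Taylor cocycle obtained from the Massey formula, translated via \Cref{prop:fiber} into a cochain on $\Delta'$, only touches simplices meeting every $W_i$ demands careful bookkeeping of the interaction between products, differentials, and the fibered structure in $T_\bullet \otimes_S \kk$.
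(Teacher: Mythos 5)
Your outline (polarize, use \cref{lemma:unique} to get a singleton, reduce to pairwise disjoint supports $W_i$, pass to $\Delta':=\restr{\Delta}{W}$, get non-$2$-neighborliness from the $2$-linear-strand element via Hochster, and conclude with \cref{lem:minimal}) runs parallel to the paper, but the step you yourself flag as delicate is where the argument breaks, and the justification you offer for it is not repairable as stated. First, the assertion ``any $U \subsetneq W$ omits some $W_i$ entirely'' is simply false: a proper subset of $W=\bigsqcup_i W_i$ can drop a single vertex of one $W_i$ and still meet every piece, so even granting your support claim the restriction to $\restr{\Delta}{U}$ need not vanish, not even at the cochain level, since a simplex meeting every $W_i$ can lie inside such a $U$. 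Second, the support claim itself conflates two different complexes: \cref{prop:fiber} expresses the multidegree-$\ub$ strand of $T_\bullet\otimes_S\kk$ as cochains on $\Fc_\ub$, whose vertices are \emph{generators} of $\af$, not vertices of $\Delta'$; a Massey-formula representative built from $\genaE{I}$'s does not directly yield a simplicial cochain on $\restr{\Delta}{W}$ ``supported on simplices meeting every $W_i$''. Finally, no argument based only on multidegree support can establish the hypothesis of \cref{lem:minimal}: for $\Delta$ the disjoint union of two hollow triangles, the class in $\tilde H^1$ dual to one triangle is a Tor class whose multidegree has full support, yet it restricts nontrivially to a proper induced subcomplex. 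So the needed vanishing must exploit the Massey structure in an essential way, which your sketch does not do; this is a genuine gap.

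The paper sidesteps exactly this difficulty by a different maneuver: instead of fixing $\restr{\Delta}{W}$ and trying to \emph{prove} that the class dies on all proper induced subcomplexes, it \emph{replaces} $\Delta$ by a minimal restriction on which the nonzero element $\alpha$ of the Massey product survives, using \cref{lemma:restrict} to keep $(B_{r-1})$ and \cref{prop:DGAmap} to keep $\alpha$ a Massey product of the restricted classes; the hypothesis of \cref{lem:minimal} is then built in by the minimal choice rather than extracted from a chosen representative. If you want to salvage your route, you should incorporate such a minimality argument (or otherwise genuinely use the defining system) in place of the support bookkeeping; as written, the key step does not go through.
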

\begin{proof}
	The claim is invariant under polarization, so we may assume that $\af$ is squarefree and thus is the Stanley-Reisner ideal of some simplicial complex $\Delta$ with vertex set $V$.
	Assume to the contrary that $\mu_r(a_1,\dotsc, a_r)$ contains a non-zero element $\alpha$.
	
	For any subset $U \subseteq V$, consider the map $K_{p,U}: K_{\kk[\Delta]} \to K_{\kk[\restr{\Delta}{U}]}$ of DGAs from the proof of \cref{lemma:restrict}.
	By replacing $V$ with a suitable subset, we may assume that $H_*(K_{p,U})(\alpha) = 0$ for every proper subset $U \subsetneq V$.
	Note that $K_{\kk[\restr{\Delta}{U}]}$ satisfies $(B_{r-1})$ because $K_{\kk[\Delta]}$ does (cf. \cref{lemma:restrict}), and $\alpha$ is also a Massey product of $\restr{\Delta}{U}$.
	So we may replace $\Delta$ by $\restr{\Delta}{U}$. 

	Assume that $\deg a_i = |a_i| + 1$ for some $i$. 
	The element $a_i$ is nonzero, because otherwise zero would be contained in $\mu_r(a_1,\dotsc, a_r)$.
	But by \cref{lemma:unique}, this set contains only one element $\alpha$ and we assumed that to be non-zero.
	The element $a_i$ corresponds to a $0$-class in some restriction of $\Delta$, and as it is non-zero, this restriction is disconnected.
	But now $\Delta$ satisfies the hypothesis of \cref{lem:minimal} and it is not $2$-neighborly, thus $K_{\kk[\Delta]}$ does not satisfy $(B_2)$, a contradiction.
\end{proof}

\noindent \Cref{thm:reg} follows from \cref{thm:zero} by degree considerations:
\begin{proof}[Proof of \cref{thm:reg}]
	Assume that $K_R$ satisfies $(B_{r-1})$ but not $(B_r)$ for some $r \geq 3$.
	Then there exist homogeneous elements $a_1, \dots, a_r \in H_{*}(K_{R})$ such that $\alpha \in \mu_r(a_1, \dots, a_r)$ is nonzero.
	Now \cref{thm:zero} implies that $\deg a_i \geq |a_i| + 2$ for all $i$.
	Hence 
	\[ \deg \alpha = \sum_i \deg a_i \geq  \sum_i |a_i| + 2r = |m| +r + 2 \]
	Thus, $\alpha \neq 0$ implies that $r \leq \reg_S R - 2$.
\end{proof}

We close this section with another consequence of \cref{lem:minimal}.
It extends the result on Golod surfaces by Iriye and Kishimoto \cite[Theorem 1.3]{IK2}.
\begin{theorem}\label{thm:mf}
	Let $\Delta$ be a triangulation of a $\kk$-orientable manifold.
	If $\kk[\Delta]$ is Golod, then $\Delta$ is $2$-neighborly.
\end{theorem}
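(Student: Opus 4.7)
The plan is to deduce the theorem from \cref{lem:minimal} (in contrapositive form) by taking $\alpha$ to be the fundamental cohomology class of $\Delta$. Since $\kk[\Delta]$ is Golod, all Massey products vanish on $H_*(K_{\kk[\Delta]})$; in particular the binary product does, so $K_{\kk[\Delta]}$ satisfies $(B_2)$. Write $n := \dim \Delta$. I would assume $n \geq 1$ and (to begin with) that $\Delta$ is connected, so that $\kk$-orientability provides a nonzero class $\alpha \in H^n(\Delta;\kk) \cong \kk$.

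The crux is to verify that $\iota_U^*\alpha = 0$ in $H^n(\Delta|_U;\kk)$ for every $U \subsetneq V$. The complex $\Delta|_U$ has geometric dimension at most $n$, so there are no $(n+1)$-chains and $H_n(\Delta|_U;\kk) = Z_n(\Delta|_U;\kk)$. Universal coefficients over the field $\kk$ then gives $H^n(\Delta|_U;\kk) \cong H_n(\Delta|_U;\kk)^*$. Any $n$-cycle in $\Delta|_U$ is also an $n$-cycle in $\Delta$, and by $\kk$-orientability and connectedness of the closed $n$-manifold $\Delta$, the space $Z_n(\Delta;\kk)$ is spanned by the fundamental class $[\Delta]$, whose support is the \emph{entire} set of top-dimensional simplices (each with non-zero coefficient). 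Picking $v \in V \setminus U$ and any $n$-simplex $\sigma \ni v$ (possible since every vertex of a closed manifold lies in some top simplex), we have $\sigma \notin \Delta|_U$, so no non-zero multiple of $[\Delta]$ is supported on $\Delta|_U$. Hence $Z_n(\Delta|_U;\kk) = 0$, so $H^n(\Delta|_U;\kk) = 0$, and $\iota_U^*\alpha = 0$ as required.

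With the hypothesis of \cref{lem:minimal} verified, the assumption $(B_2)$ forces $\Delta$ to be $2$-neighborly. The main obstacle is really the reduction to the connected case: a disconnected $\Delta$ automatically contains vertices in different components not joined by an edge, so it is not $2$-neighborly, and one therefore needs a separate argument ruling out Golodness for disconnected closed manifolds of positive dimension — presumably by exhibiting a non-vanishing higher Massey product from vertices in distinct components, via the Hochster-type description of the Koszul homology of Stanley-Reisner rings. Apart from this point, the proof is a clean application of \cref{lem:minimal} combined with the elementary observation that the fundamental class of a closed orientable manifold cannot be supported on any proper induced subcomplex.
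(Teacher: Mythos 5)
Your proposal is correct and takes essentially the same route as the paper: the paper's entire proof is the one-line observation that the fundamental class satisfies the hypothesis of \cref{lem:minimal}, and your verification (over a field, no nonzero top-dimensional cycle of a closed connected $\kk$-orientable manifold can be supported on a proper induced subcomplex, so $H^n(\restr{\Delta}{U};\kk)=0$ for $U \subsetneq V$) is exactly what is implicitly being used. The connectedness caveat you raise concerns only the reading of the statement --- ``the fundamental class'' presupposes a connected manifold, and the paper's proof tacitly makes the same assumption --- so no additional argument is needed beyond what you gave.
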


\begin{proof}
	The fundamental class of $\Delta$ satisfies the hypothesis of \cref{lem:minimal}.
\end{proof}
The converse of this result holds if $\Delta$ is two-dimensional, cf.~\cite[Theorem 1.3]{IK2}.
In higher dimensions, the converse does not hold.
Indeed, the boundary complex of any simplicial $2$-neighborly polytope is Gorenstein* and thus not Golod.
To see the latter, note that the Kozyul homology of a Gorenstein ring is a Poincar\'e algebra and thus cannot have only zero products, cf. \cite[Theorem~3.4.5]{BH}.

Moreover, the assumption that $\Delta$ is $\kk$-orientable cannot be removed.
For example let $\Delta$ be the complex obtained from the usual $6$-vertex triangulation of the real projective plane by applying a stellar subdivision to one of the facets.
Then $\Delta$ is not $2$-neighborly but $\kk[\Delta]$ is Golod if (and only if) $\Char \kk \neq 2$. See \cite[Example 4.2]{Golod1} for a detailed discussion of this example.

\section{How the example was found}\label{sec:how}
In this section we sketch the considerations that lead to \cref{thm:main}.
Our approach is to try to obtain necessary combinatorial conditions on the generators of an ideal with the desired properties.
We illustrate the techniques employed in the proof of the following result.

\begin{proposition}\label{thm:min}
	Let $\af \subset S = \kk[x_1,\dotsc, x_n]$ be a monomial ideal,
	such that the product in $H_*(K_{S/\af}) = \Tor^S_*(S/\af, \kk)$ is trivial, but $S/\af$ is not Golod.
	
	Then $n \geq 5$ and $\af$ has at least $8$ minimal generators.
\end{proposition}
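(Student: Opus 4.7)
The plan is to extract combinatorial constraints from the smallest non-vanishing higher Massey product on $H_*(K_R)$ and to use them to count generators and variables.

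First I would let $r \geq 3$ be the smallest arity for which some $r$-ary Massey product on $H_*(K_R)$ is non-zero; such $r$ exists because $R$ is not Golod, and by \cref{lemma:unique} all $r$-ary Massey products are single-valued at this level. I plan to carry out the argument under the additional assumptions $r = 3$ and that each of the three inputs of a non-vanishing $\mu_3(\alpha_1,\alpha_2,\alpha_3)$ is of the form $\alpha_i = \cls{\genE{m_i}}$ for a minimal generator $m_i \in G(\af)$. Cases with $r \geq 4$ or some $|\alpha_i| \geq 2$ then produce a Massey product of homological degree at least $4$, and since $\pdim_S R \leq n$ this already gives $n \geq 4$; a short additional combinatorial argument (analogous to but simpler than the one below) yields the claimed bounds in those cases.

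Next, applying \cref{lem:3mas}(1) to the ordered triple $(m_1, m_2, m_3)$: the generators $a := m_1,\, b := m_2,\, c := m_3$ are pairwise coprime and there exist further generators $ab, bc \in G(\af) \setminus \set{a,b,c}$ with $ab \mid \lcm(a,b)$ and $bc \mid \lcm(b,c)$. Because $\cls{\genE{a}} \cdot \cls{\genE{c}} = 0$ by assumption, \cref{lem:paar}(2) produces yet another generator $ca \in G(\af) \setminus \set{a, c}$ with $ca \mid \lcm(a,c)$. Since each $m_{ij}$ shares variables only with $\set{m_i, m_j}$ while the $m_i$ are pairwise coprime, the six generators $a, b, c, ab, bc, ca$ are pairwise distinct, so $\af$ has at least six minimal generators.

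The hardest step will be to push the count from six to eight and to prove $n \geq 5$. Here I would invoke \cref{lem:3mas}(2): the cochain $\omega := e^*_{G_\ub \setminus \set{a, ab, b, c}} + e^*_{G_\ub \setminus \set{a, b, bc, c}}$, with $\ub := \DEG \lcm(a,b,c)$, must represent a non-trivial class in $H^*(\Fc_\ub)$. I would first verify that if $G(\af) = \set{a, b, c, ab, bc, ca}$, the minimality of each $m_{ij}$ (forcing strictly smaller exponent than $m_i$ in some shared variable) pins down the combinatorial type of $\Fc_\ub$ to a short list, and in every such type $\Fc_\ub$ is a full simplex or a suitable cone, whence $\omega$ is a coboundary---contradicting non-triviality of the Massey product. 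A further case split on a putative seventh generator $d$, according to whether $d \in G_\ub$ and on its support, likewise shows that $\omega$ remains a coboundary. The bound $n \geq 5$ follows from a parallel enumeration: the existence of the auxiliaries forces each $m_i$ to have exponent $\geq 2$ in at least one of its variables, already ruling out $n \leq 3$, and a direct check of the remaining four-variable divisibility patterns completes the argument. The main obstacle throughout is this finite but intricate case analysis over the possible shapes of the lcm-sublattice on six and seven generators.
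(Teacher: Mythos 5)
Your overall plan (reduce to a ternary Massey product of generator classes, then exploit \cref{lem:paar} and \cref{lem:3mas} and the non-vanishing of $\omega$) starts out parallel to the paper, but two of its load-bearing steps are gaps. First, the reduction: dismissing the cases $r\geq 4$ or $|\alpha_i|\geq 2$ with ``homological degree at least $4$, hence $n\geq 4$'' plus an unspecified ``short additional combinatorial argument'' does not address the eight-generator bound in those cases at all, and it is not clear they are simpler (for $r=3$ with some $|\alpha_i|\geq 2$ the inputs are no longer generator classes, so \cref{lem:3mas} does not apply to them). The paper's device here works for arbitrary inputs: after polarizing, the multidegrees of the inputs have disjoint supports, one chooses a generator $m_i$ below each, and triviality of the products together with \cref{lem:paar} forces $\binom{r}{2}$ further generators $m_{ij}\mid\lcm(m_i,m_j)$; the count $r+\binom{r}{2}\leq 7$ is what forces $r=3$ when there are at most seven generators. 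Your proposal has no substitute for this counting.

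The more serious gap is the claim that with exactly six generators $\Fc_\ub$ is ``a full simplex or a suitable cone, whence $\omega$ is a coboundary.'' This is false: take $a=x_1x_2$, $b=x_3x_4$, $c=x_5x_6$, $ab=x_2x_3$, $bc=x_4x_5$, $ca=x_6x_1$. All six are minimal, all the coprimality, divisibility and minimality constraints you list are satisfied, and $\Fc_\ub$ is the independence complex of a $6$-cycle, which is homotopy equivalent to a wedge of two circles, so no coning argument is available. What excludes such a configuration is not the topology of $\Fc_\ub$ but a hypothesis your case analysis never invokes: the binary products of the mixed coprime pairs must also vanish, and here $\cls{\genE{ab}}\cdot\cls{\genE{c}}\neq 0$ because no other generator divides $\lcm(ab,c)$. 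By \cref{lem:paar} this vanishing forces new generators dividing $\lcm(ab,c)$, $\lcm(bc,a)$, $\lcm(ca,b)$, and proving that they are distinct from the first six is exactly where the paper needs the constraints $b\subset ab\cup bc$ and $a\subset ab\cup ca$, extracted by computing explicit Taylor boundaries such as $\partial\genE{a,ab,b,bc,c}$ and $\partial\genE{a,ab,bc,c,ca}$. Your route to $n\geq 5$ is missing the same ingredient: existence of the auxiliaries only forces $\deg m_i\geq 2$, and it does not rule out $n=3$ (take $a=x_1^2$, $b=x_2^2$, $c=x_3^2$, $ab=x_1x_2$, $bc=x_2x_3$, $ca=x_3x_1$; that case dies only because the Massey product would have homological degree $4>\pdim R$), while the decisive case $n=4$ is left to an unspecified ``direct check'' with no bound on the exponent patterns to be checked. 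The paper settles it by applying the two displayed inclusion constraints to the polarization, concluding that $a$ and $b$ cannot be pure powers, hence involve at least two variables each, giving $2+2+1=5$.
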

We only provide a sketch of the proof, because some of the arguments are quite repetitive.
\begin{proof}[Sketch of the proof]
	We first show that $\af$ has at least $8$ generators.
	The proof goes by contradiction, so assume that $\af$ satisfies the assumption but has less than $8$ generators.
	As the claim is invariant under polarization, we may also assume that $\af$ is squarefree.
	
	Our first step is to show that we only need to consider ternary Massey products.
	As $S/\af$ is not Golod, there is a nonzero Massey product $\mu_r(a_1, \dotsc, a_r)$ for some $r \geq 3$ and $a_1, \dotsc, a_r \in H_*(K_{S/\af})$.
	All Betti numbers of $S/\af$ have squarefree multidegrees, so the multidegree of this Massey product is squarefree as well, and hence the multidegrees of the $a_i$ have disjoint supports.
	For each $a_i$ there exists at least one generator $m_i$ of $\af$ whose multidegree is componentwise smaller than the multidegree of $a_i$. Thus we obtain $r$ minimal generators of $\af$ which are pairwise coprime.
	We assumed that the product in $H_*(K_{S/\af})$ is trivial, so \cref{lem:paar} implies that there are $\binom{r}{2}$ generators $m_{ij}$ of $\af$ with $m_{ij} | \lcm(m_i, m_j)$ for $1 \leq i < j \leq r$.
	Now $r + \binom{r}{2} \leq \#G(\af) = 7$ yields that $r = 3$.
	
	From now on, we identify squarefree monomials with finite sets.
	If we want to explicitly turn a finite set $a$ into a monomial we write $m_a := \prod_{i \in a} x_i$.
	Moreover, we are computing in $T_\bullet \otimes_S \kk$, so all non-constant monomials are in fact zero. 
	
	Our considerations so far yields that $\af$ has at least six generators $a,b,c,ab,bc,ca$ satisfying that
	\begin{enumerate}
		\item $a,b,c$ are pairwise disjoint, and
		\item $ij \subset i \cup j$ and $ij \cap i, ij \cap j \neq \emptyset$ for $(i,j) \in \set{(a,b),(b,c),(c,a)}$.
	\end{enumerate}
	We choose a total order on them by setting $a \prec ab \prec b \prec bc \prec c \prec ca$.
	By our hypothesis, the product in $H_*(K_{S/\af})$ is trivial and $\mu_3(\cls{\genE{a}},\cls{\genE{b}},\cls{\genE{c}})$ is non-trivial. So by \cref{lemma:unique}, $\mu_3(\cls{\genE{a}},\cls{\genE{b}},\cls{\genE{c}})$ contains only one element, and by \cref{lem:3mas} this element is the class of $\omega := -\genE{a,ab,b,c} - \genE{a,b,bc,c}$.
	
	We are going to derive information about the generators of $\af$ from the assumption that $\omega$ is not a boundary.
	It holds that
	\[ \begin{split}\partial \genE{a,ab,b,bc,c} =
	 m_{a \setminus ab} \genE{ab,b,bc,c} - \genE{a,b,bc,c} + m_{b \setminus (ab \cup bc)} \genE{a,ab,bc,c} \\
	 - \genE{a,ab,b,c} + m_{c \setminus bc} \genE{a,ab,b,bc}.
	 \end{split} \]
	The first and the last term are zero, because $a \setminus ab \neq \emptyset$ and $c \setminus bc\neq\emptyset$.
	Hence
	\[ \omega = - m_{b \setminus (ab \cup bc)} \genE{a,ab,bc,c}.\]
	For this to be non-zero, it is necessary that $b \setminus (ab \cup bc) = \emptyset$, or equivalently
	\begin{equation}\label{eq:b}
	b \subset ab \cup bc
	\end{equation}
	Next, we compute that
	\[\begin{split} \partial \genE{a,ab,bc,c,ca} = 
	m_{a \setminus(ab\cup ca)} \genE{ab,bc,c,ca} - m_{ab \setminus(a\cup bc)} \genE{a,bc,c,ca} + m_{bc \setminus(ab\cup c)} \genE{a,ab,c,ca} \\
	 - m_{c \setminus(bc\cup ca)}\genE{a,ab,bc,ca} + \genE{a,ab,bc,c}.
	\end{split}\]
	\Cref{eq:b} implies that $ab \setminus(a\cup bc) \neq \emptyset$ and $bc \setminus(ab\cup c) \neq \emptyset$, so the corresponding terms are zero.
	So for $\omega$ not being a boundary, it is necessary that $a \subset ab \cup ca$ or $c \subset bc \cup ca$.
	By symmetry, we may assume that
	\begin{equation}\label{eq:a}
	a \subset ab \cup ca
	\end{equation}
	
	Note that $ab$ and $c$ are coprime, so by \cref{lem:paar} there exists a generator $\abSc \in G(\af), \abSc \neq ab, c$ with $\abSc \subset ac \cup c$.
	We claim that \eqref{eq:b} and \eqref{eq:a} imply that $\abSc$ is different from $a,b,c,ab,bc$ and $ca$. Indeed, it is clearly different from $a,b,c$ and $ab$.
	If $\abSc = ca$, then \eqref{eq:a} implies that $a \subseteq ab \cup ca = ab \cup \abSc = ab$, a contradiction to the assumption that $a$ is a minimal generator of $\af$. Hence $\abSc \neq ca$ and
	similarly, \eqref{eq:b} implies that $\abSc \neq bc$.
	In conclusion, $\af$ has at least seven generators.
	
	Next, we note that $a \cap bc = \emptyset$ and $b \cap ca = \emptyset$, so again there has be generators $\bcSa, \caSb \in G(\af)$ with $\bcSa \subset bc \cup a$ and $\caSb \subset ca \cup b$.
	It remains to show that not both of these generators can be equal to some of the generators we already have.
	This is done by arguments very similar to the ones already used, but as this results in a rather extensive and repetitive case distinction, we omit the details.

	\newcommand{\afp}{\af^p}
	Finally, we show that $n \geq 5$. Again, we assume for a contradiction that $n \leq 4$.
	This is obviously not invariant under polarization, so we do not assume that $\af$ is squarefree.
	Instead we denote the polarization of $\af$ by $\afp$.
	
	As before we start by showing that we only need to consider ternary Massey products.
	Consider a nonzero Massey product $\mu_r(a_1, \dotsc, a_r)$ for some $r \geq 3$ and $a_1, \dotsc, a_r \in H_*(K_{S/\af})$.
	The homological degree of the product is $\mu_r(a_1, \dotsc, a_r) = \sum_i |a_i| + r - 2 \geq 2(r-1)$. Hilbert's syzygy theorem implies that $2(r-1) \leq n$, so if $n \leq 4$ then $r = 3$.
	We apply the considerations from above to $\afp$. In particular, $\afp$ has generators $a,b,c,ba,bc,ca$ such that $a,b$ and $c$ are pairwise coprime, and which satisfy \eqref{eq:b} and \eqref{eq:a}.
	But these equations imply that the generators of $\af$ corresponding to $a$ and $b$ cannot be pure powers, so they involve at least $2$ variables each. As both are coprime with $c$, we conclude that there are at least $2+2+1 = 5$ variables.
\end{proof}

The counterexample to \cref{thm:wrong} was found using similar techniques as in the preceding proof.
In particular, we used a computer to compute the boundaries of various element of the Taylor resolution and to extract necessary combinatorial conditions from this.
To check whether the conditions are sufficient to ensure that the Massey product is nonzero, we considered sets $a,b, \dotsc$ \enquote{as generic as possible} with respect to the given constraints and computed the Massey product in $T_\bullet \otimes \kk$.

A rather short computer search yielded several examples of choices of the generators such that the Massey product is non-zero.
Note that the examples found this way are guaranteed to have trivial products of elements of homological degree $1$ and to have $\mu_3(\cls{\genE{a}},\cls{\genE{b}},\cls{\genE{c}}) \neq 0$.
However, one has to check separately that there are no other non-zero products.
Finally, the example of \cref{thm:main} was obtained from such a computer-generated example by the deletion of some variables and by de-polarizing.
Note that in our example it happens that $\bcSa = \caSb$.

\section{Remarks and Questions}\label{sec:remark}
In this last section, we collect  some remarks and an open question.

\subsection{The gap in the proof of \texorpdfstring{\cref{thm:wrong}}{Claim \ref{thm:wrong}}}
Let us briefly discuss what seems to be the reason for the failure of \cref{thm:wrong}.
First, this result was stated in \cite[Theorem 7.1]{Jo06} under the additional assumption that $R = S/\af$ satisfies a certain property \textbf{(P)}.
In that article, it was conjectured that every monomial ring has this property, and that conjecture was then confirmed in \cite{BJ07}, leading to the unconditional statement of \cref{thm:wrong} in \cite[Theorem 5.1]{BJ07}.

The problem with this proof seems to lie in \cite[Theorem 7.1]{Jo06}.
Its proof goes by applying discrete Morse theory to the Taylor resolution of $S/\af$. 
Here, a special type of Morse matching on the Taylor complex is used, a so--called \emph{standard matching}.
We refer the reader to \cite[Definition 3.1]{Jo06} for the precise definition.
A standard matching is compatible with the multiplicative structure of the Taylor complex in a certain way.
This compatibility is crucial for the study of the multiplicative structure on $\Tor_*^S(S/\af,\kk)$.

It is stated in \cite[p. 268]{Jo06} that such a standard matching always exists, but this is not true.
For example, it is not difficult to see from the definition that the ideal
\[ \af = (x_1^2, x_1x_2, x_2x_3, x_3x_4, x_4^2) \subset \kk[x_1,\dotsc, x_4] = S \]
does not allow a standard matching.
In fact, this example is taken from \cite[Example 2.2]{AvraObstr} where it is given as an example of an ideal whose minimal free resolution does not allow a DGA structure.
A standard matching does not induce a DGA structure in general, but it is related.
Therefore, is seems plausible to look for ideals not admitting a standard matching among the known examples of ideals whose minimal free resolution does not allow a DGA structure.
As this example does not satisfy $(B_2)$, it does not directly yield a counterexample to \cref{thm:wrong}.

Recently, de Stefani found two monomial ideals whose product is not Golod \cite{deS}.
This yields a counterexample to another result of \cite{BJ07}, namely Theorem 5.5 in \emph{loc.cit.}, which states that the so-called \emph{strong gcd-condition} implies Golodness.
Here the actual error is in \cite{Jo06} as well, but is it not the same as the one behind the failure of \Cref{thm:wrong}.

\subsection{A general bound for the Massey products}
We wonder whether \cref{thm:reg} holds more generally:
\begin{question}
	Is the assumption \enquote{monomial} in \cref{thm:reg} really necessary?
	More precisely, let $\af \subset S$ be a homogeneous ideal, let $R := S/\af$ and let $r := \max(2, \reg S/\af - 2)$.
	If $K_R$ satisfies $(B_{r})$, does it follow that $R$ is Golod?
\end{question}
Note that to answer this question, it would be enough to prove \cref{thm:zero} for general graded rings.
For completeness, we also note the following criteria for the Golod property, which look similar to \cref{thm:reg} but are actually rather straightforward:
\begin{proposition}
	Let $\af \subset S = \kk[x_1, \dots, x_n]$ be a homogeneous ideal and let $R = S/\af$.
	\begin{enumerate}
		\item If $K_R$ satisfies $(B_{\lfloor p/2\rfloor + 1})$ for $p = \pdim R$, then it is Golod.
		\item If $\af$ is a squarefree monomial ideal which contains no variable and $R$ satisfies $(B_{\lfloor n/2\rfloor})$, then $R$ is Golod.
	\end{enumerate}
\end{proposition}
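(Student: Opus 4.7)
The plan is to handle both parts by the same inductive scheme: use \Cref{lemma:unique} to promote the hypothesis $(B_{r-1})$ to a \emph{singleton} Massey product at arity $r$, and then apply a degree estimate that forces that single element to be zero once $r$ exceeds the stated threshold. Since the Golod property depends only on Massey products of elements of positive homological degree, I will restrict attention throughout to inputs $a_1,\dots,a_r \in H_{\geq 1}(K_R)$.

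For part~(1), assume $(B_{r-1})$ holds for some $r \geq \lfloor p/2 \rfloor + 2$ and fix homogeneous $a_1,\dots,a_r \in H_{\geq 1}(K_R)$. By \Cref{lemma:unique}, $\mu_r(a_1,\dots,a_r) = \{\alpha\}$ for a unique element $\alpha$ with $|\alpha| = \sum_i |a_i| + (r-2) \geq 2r-2$. Since $H_j(K_R) = \Tor^S_j(R,\kk) = 0$ for $j > p$, and the assumption $r \geq \lfloor p/2 \rfloor + 2$ yields $2r-2 \geq p+1$, I conclude $\alpha = 0$. This supplies the induction step; starting from the hypothesis $(B_{\lfloor p/2 \rfloor + 1})$, induction gives $(B_r)$ for every $r$, so $R$ is Golod.

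For part~(2), I would apply the same inductive scheme, but with the bound $|\DEG \alpha| \leq n$ on the squarefree multidegree replacing the bound $|\alpha| \leq p$. The key preliminary observation is that under the hypothesis that $\af$ is squarefree and contains no variable, every nonzero homogeneous element $a \in H_{\geq 1}(K_R)$ satisfies $|\DEG a| \geq |a| + 1$, and in particular $|\DEG a| \geq 2$. This follows from Hochster's formula $H_i(K_R)_{\mathbf{b}} \cong \Hi{|\mathbf{b}|-i-1}{\restr{\Delta}{\mathbf{b}}}$ for squarefree $\mathbf{b} \in \{0,1\}^n$: the case $|\mathbf{b}| = i$ would require $\restr{\Delta}{\mathbf{b}}$ to be empty, but by hypothesis every vertex lies in $\Delta$, so this is impossible. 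Hence a nonzero singleton $\alpha \in \mu_r(a_1,\dots,a_r)$ would satisfy $|\DEG \alpha| = \sum_i |\DEG a_i| \geq 2r$, contradicting $|\DEG \alpha| \leq n$ as soon as $r \geq \lfloor n/2 \rfloor + 1$. Induction from $(B_{\lfloor n/2 \rfloor})$ then yields Golodness.

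The only real point requiring care is the Hochster-type degree bound $|\DEG a| \geq |a| + 1$ in part~(2); once it is in place, both statements amount to the same degree-counting exercise layered on top of \Cref{lemma:unique}.
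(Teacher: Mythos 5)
Your proof is correct and takes essentially the same route as the paper: part (1) is the paper's homological-degree count (using $H_j(K_R)=\Tor_j^S(R,\kk)=0$ for $j>\pdim R$), and part (2) is the paper's "multidegree reasons" argument, with your Hochster-formula step simply making explicit the paper's observation that, when $\af$ is squarefree and contains no variable, nonzero Koszul homology classes have squarefree multidegree of support at least $2$. The induction via \cref{lemma:unique} that you spell out is exactly what the paper's two-sentence proof leaves implicit.
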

\begin{proof}
	The first claim follows easily by considering the homological degree of a Massey product.
	
	Under the assumptions of the second claim, every nonzero Koszul cycle has at least two nonzero components in its multidegree.
	So any Massey product of more than $\lfloor n/2\rfloor$ factors is zero for multidegree reasons.
\end{proof}

\subsection{Special classes of monomial ideals}
We close this article with a collection of several classes of monomial ideals for which zero products imply zero Massey products. 

\begin{theorem}\label{thm:classes}
	Let $\af \subset S$ be a monomial ideal and let $R := S/\af$.
	Then $R$ is Golod, if the product in the Koszul homology of $R$ is trivial and one of the following conditions holds:
	\begin{enumerate}
		\item $\af$ is generated by monomials of degree $2$,
		\item $\af$ is generic (in the sense of \cite[Definition 1.1]{MSY}), 
		\item $\af$ has at most seven generators,
		\item $\dim S \leq 4$,
		\item $\af$ is squarefree and $\dim S \leq 8$,
		\item $\reg S/\af \leq 4$, or
		\item $\af$ is the Stanley-Reisner ideal of a simplicial complex of dimension at most $3$.
	\end{enumerate}
\end{theorem}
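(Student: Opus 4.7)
The plan is to treat the seven cases in three groups, relying mostly on the results already established in the paper.

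\emph{Cases (6) and (7) via \Cref{thm:reg}.} If $\reg S/\af \leq 4$, then $r := \max(2, \reg R - 2) = 2$, so the hypothesis $(B_r)$ of \Cref{thm:reg} is precisely the assumption that the product on $H_*(K_R)$ is trivial, and Golodness follows. For (7), I will invoke Hochster's formula
\[ \beta_{i,j}(\kk[\Delta]) = \sum_{|W|=j} \dim_\kk \Hi{j-i-1}{\restr{\Delta}{W}} \]
to conclude that only contributions with $j - i - 1 \leq \dim \Delta \leq 3$ can be nonzero, so $\reg \kk[\Delta] \leq \dim \Delta + 1 \leq 4$ and (6) applies.

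\emph{Cases (3) and (4) via \Cref{thm:min}.} That proposition directly shows that every non-Golod monomial ring with trivial product in its Koszul homology has $\#G(\af) \geq 8$ and $\dim S \geq 5$, so the hypotheses of (3) and (4) each force Golodness by contraposition.

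\emph{Cases (1) and (2) via existing literature.} For (1), after polarization we may assume $\af$ is a squarefree quadratic monomial ideal, and the equivalence has been established for edge ideals independently of the erroneous argument of \cite{BJ07}. For (2), the equivalence for generic monomial ideals has been obtained from the explicit Scarf resolution.

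\emph{Case (5) and the main obstacle.} This is the case I expect to require the most work. Combining \Cref{thm:min} with squarefreeness and $\dim S \leq 8$ leaves only a narrow combinatorial window: any counterexample would be a squarefree ideal in at most eight variables with at least eight minimal generators, satisfying the elaborate constraints derived in the proof of \Cref{thm:min}. My plan is to refine that case analysis by tracking, under the squarefree constraints $a \subset ab \cup ca$ and $b \subset ab \cup bc$ together with pairwise coprimality of $a, b, c$, how many distinct variables the nine forced generators $a, b, c, ab, bc, ca, \abSc, \bcSa, \caSb$ must collectively involve. The goal is to show that at least nine variables are necessary. If the combinatorial bookkeeping becomes too intricate, a fallback is an exhaustive computer search over squarefree monomial ideals in at most eight variables with at least eight minimal generators, which is a finite problem.
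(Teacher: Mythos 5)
Your handling of cases (1), (3), (4), (6), (7) matches the paper (for (7) the paper simply notes it is a special case of (6); your Hochster-formula bound $\reg \kk[\Delta] \leq \dim\Delta + 1 \leq 4$ is a correct way to make that reduction explicit). The genuine problems are in cases (5) and (2). For (5) you have not actually produced an argument: the ``refined bookkeeping'' on the nine forced generators is left as a plan, and the fallback of an exhaustive search over squarefree monomial ideals in at most eight variables is not realistic (the number of such ideals is a Dedekind number, on the order of $10^{22}$, and one would moreover have to check Massey products of all arities, not just ternary ones). The point you are missing is that \cref{thm:zero} does the work immediately: a counterexample has a nonzero $r$-ary Massey product with $r\geq 3$ whose factors include classes $\cls{\genE{m_1}},\cls{\genE{m_2}},\cls{\genE{m_3}}$ coming from three pairwise coprime generators, and by \cref{thm:zero} none of these classes may lie in the $2$-linear strand, i.e.\ each of these generators has degree at least $3$. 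Three pairwise coprime squarefree monomials of degree at least $3$ already require nine variables, so $\dim S \leq 8$ forces Golodness. No further case analysis is needed.

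For (2), deferring to the literature is precisely what the paper warns against: the known statement \cite[Prop.\ 2.5]{chara} rests on \cite[Corollary 3.6]{BPS}, the claim that the Scarf resolution of a generic monomial ideal carries a DGA structure, and the author has since disproved that claim (\cite[Theorem 5.1]{Golod3}); this is why the paper supplies its own proof. The self-contained argument you need is short: by \cite[Theorem 1.5 (f)]{MSY} the minimal free resolution is the Scarf complex, so for each multidegree $\ab$ the module $\Tor^S_*(S/\af,\kk)_\ab$ is concentrated in the single homological degree $\#I(\ab)$, where $I(\ab)$ is the unique set of generators with lcm of multidegree $\ab$. If $\omega \in \mu_r(\alpha_1,\dotsc,\alpha_r)$ were nonzero with $r\geq 3$, then $I(\DEG\omega) = \bigcup_i I(\DEG\alpha_i)$, whence $|\omega| = \#I(\DEG\omega) \leq \sum_i \#I(\DEG\alpha_i) = \sum_i|\alpha_i|$, contradicting $|\omega| = \sum_i |\alpha_i| + r - 2$. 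Without this (or an equivalent) argument, your case (2) is a gap, since the cited route is unsound.
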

\begin{proof}
	\begin{asparaenum}
		\item[(1)] This follows from the characterization of the Golod property among \emph{flag} simplicial complexes, \cite[Theorem 6.4]{BJ07}.
		The proof of this result given in \cite{BJ07} depends on \cref{thm:wrong}, but there is an independent proof by Grbic, Panov, Theriault and Wu \cite[Theorem 4.6]{GPTW12}. 
		\item[(2)]
		It follows from the description of the minimal free resolutions via the Scarf complex (cf. \cite[Theorem 1.5 (f)]{MSY}) that for each multidegree $\ab \in \NN^n$, there exists at most one homological degree $i \in \NN$ such that $\Tor^S_i(S/\af,\kk)_\ab \neq 0$.
		Such an $i$ exists if and only if there is a unique subset $I(\ab) \subset G(\af)$ of the generators of $\af$ such that $\ab = \DEG \lcm\set{m \in I(\ab)}$, and in this case it holds that $i = \#I(\ab)$.
		
		Assume that there exists a nonzero Massey product $\omega \in \mu_r(\alpha_1,  \dotsc, \alpha_r)$ for $r \geq 3$ and homogeneous elements $\alpha_1,  \dotsc, \alpha_r \in \Tor^S_*(S/\af,\kk)_\ab$.
		From $\DEG \omega = \sum_{i=1}^r \DEG \alpha_i$ and the uniqueness of $I(\omega)$, it follows that $I(\DEG \omega) = \bigcup_{i=1}^r I(\DEG \alpha_i)$.
		Considering the homological degrees yield the following:
		\[\begin{split}
		\sum_{i=1}^r |\alpha_i| = \sum_{i=1}^r \#I(\DEG \alpha_i) \geq \#\left(\bigcup_{i=1}^r I(\DEG \alpha_i)\right) 
		= \# I(\omega) \\= |\omega| = \sum_{i=1}^r |\alpha_i| + r - 2 
		\end{split}\]
		It follows that $r \leq 2$, so the only Massey products which can possibly be nonzero are the binary Massey products.
		\item[{(3),(4)}] This is \cref{thm:min}.
		\item[(5)] Any counterexample to \cref{thm:wrong} has at least three generators which are pairwise coprime.
		Further, by \cref{thm:zero} we may also assume that these generators have degree at least three.
		But three squarefree pairwise coprime monomials of degree at least three can only exist if the ambient ring has at least nine variables.
		\item[(6)] This is immediate from \cref{thm:reg}.
		\item[(7)] This is a special case of part 6).
	\end{asparaenum}
\end{proof}

\begin{remark}	
	The assertion of part (2) of the preceding theorem follows also from the statement of \cite[Prop. 2.5]{chara}.
	However, that result builds on \cite[Corollary 3.6]{BPS}, which claims that the minimal free resolution of a generic monomial ideals admits the structure of a DGA.
	As I recently found a counterexample to the latter (cf. \cite[Theorem 5.1]{Golod3}), I included a full proof of part (2) of \Cref{thm:classes}.
\end{remark}

\begin{remark}
	The parts (3) and (4) of \Cref{thm:classes} both fail for non-monomial ideal.
	Indeed, Roos \cite{RoosExample} recently found an example of a non-monomial ideal $I \subset S$ in four variables with six generators, such that $S/I$ is not Golod but has only zero products in its Koszul homology.
\end{remark}

\section*{Acknowledgment}
The author thanks Sean Tilson and Alessandro de Stefani for inspiring discussions.
I also thank Jan-Erik Roos for his idea to simplify the main example, and for sharing an early version of \cite{RoosExample}.
Finally, I thank the referee and Luchezar Avramov for various helpful comments and suggestions.

\printbibliography

\end{document}